\documentclass[10pt,a4paper]{article}
\usepackage[utf8]{inputenc}
\usepackage[english]{babel}
\usepackage{amsmath}
\usepackage{amsthm}
\usepackage{amsfonts}
\usepackage{amssymb}
\usepackage{graphicx}
\usepackage{mathrsfs}
\usepackage{enumerate}

\usepackage{hyperref}
\usepackage{xcolor}
\hypersetup{
  colorlinks=true,
  linkcolor={blue!80!black},
  citecolor=black,
  urlcolor={blue!80!black}
}

\theoremstyle{plain}
\newtheorem{theorem}{Theorem}[section]
\newtheorem{lemma}[theorem]{Lemma}
\newtheorem{corollary}[theorem]{Corollary}

\newtheorem{exercise}[theorem]{Exercise}
\newenvironment{solution}{\par \noindent {\it Solution.}}{\par \bigskip}

\theoremstyle{definition}
\newtheorem{definition}[theorem]{Definition}

\theoremstyle{remark}
\newtheorem{remark}[theorem]{Remark}
\newtheorem{example}[theorem]{Example}

\newcommand{\R}{{\mathbb R}}
\newcommand{\C}{{\mathbb C}}
\newcommand{\N}{{\mathbb N}}
\newcommand{\Z}{{\mathbb Z}}
\newcommand{\abs}[1]{{\left\lvert{#1}\right\rvert}}
\newcommand{\norm}[1]{{\left\lVert{#1}\right\rVert}}
\newcommand{\pv}{\operatorname{p.v.}}
\newcommand{\intlim}[2]{\bigg/_{\!\!\! #1}^{\, #2}}
\newcommand{\Ht}{{\mathscr H}}
\newcommand{\M}{\mathscr M}

\usepackage{environ}
\NewEnviron{killcontents}{}

\title{The Fourier, Hilbert and Mellin transforms on a half-line}

\date{July 25, 2023}

\usepackage{authblk}

\author[1,2]{Emilia L.K. Bl{\r a}sten}
\author[1]{Lassi P\"aiv\"arinta}
\author[1]{Sadia Sadique}

\affil[1]{{\small Division of Mathematics, Tallinn University of
    Technology, Department of Cybernetics, 19086 Tallinn, Estonia}}

\affil[2]{{\small Computational Engineering, School of Engineering
    Science, LUT University, 15210 Lahti, Finland}}

\begin{document}
\maketitle

\begin{abstract}
  We are interested in the singular behaviour at the origin of
  solutions to the equation $\Ht \rho = e$ on a half-axis, where $\Ht$
  is the one-sided Hilbert transform, $\rho$ an unknown solution and
  $e$ a known function. This is a simpler model problem on the path to
  understanding wave field singularities caused by curve-shaped
  scatterers in a planar domain.

  We prove that $\rho$ has a singularity of the form $\M[e](1/2) /
  \sqrt{t}$ where $\M$ is the Mellin transform. To do this we use
  specially built function spaces $\M'(a,b)$ by Zemanian, and these
  allow us to precisely investigate the relationship between the
  Mellin and Hilbert transforms. Fourier comes into play in the sense
  that the Mellin transform is simpy the Fourier transform on the
  locally compact Abelian multiplicative group of the half-line, and
  as a more familiar operator it guides our investigation.
\end{abstract}

\section{Introduction}
In the present article we let R.H. Mellin meet J.B.J. Fourier and
D. Hilbert. More exactly we study the connection of the Mellin
transform to the Hilbert- and Fourier transforms in a half-axis
$\R_+=(0,\infty)$. Mellin defined his transform in 1886
\cite{mellin87_uber_einen_zusam_zwisc_gewis} in connection with his
studies on certain difference- and differential equations. A bit more
than a decade later Hilbert presented a new singular integral
transform \cite{hilbert05_uber_anwen_integ_probl_funkt} in the third
International Congress of Mathematicians, 1904, where he gave a
lecture about the Riemann--Hilbert problem. Fourier's work preceded
these works of Mellin and Hilbert by more than 60 years
\cite{fourier22_theor}.

The classical Hilbert transform on the real line is defined by the
formula
\begin{equation}\label{eq:classical-hibert}
  \Ht f(x) = \pv \int_{-\infty}^\infty \frac{f(y)}{x-y} dy.
\end{equation}
The connection to the Fourier transform $\mathscr F$ is the well-known
formula
\begin{equation}\label{eq:intro-hilbert-fourier}
  \mathscr F(\Ht f)(\xi) = i \operatorname{sgn}\xi \widehat f(\xi)
\end{equation}
where $\widehat f=\mathscr F f$, see
\cite{king09_hilber_trans_vol1,stein70_singul,stein93_harmon}. However,
the Mellin transform is defined on a half-axis and the connection to
the Hilbert transform and especially to the Fourier transform is less
widely known, despite being a quite old results
\cite{folland92_fourier,havin98_commut_harmon_analy_ii}. The secret to
these connections is lying on the fact that the half-axis is a locally
compact Abelian group with respect to multiplication. The Fourier
transform is well-defined in all such groups and the convolution
theorem holds \cite{rudin90_fourier_analy_group}. Since the one-sided
Hilbert transform \cite{king09_hilber_trans_vol1} satisfies
\[
\Ht f(t) = \pv \int_0^\infty \frac{f(t/s)}{1-s} ds
\]
which is a convolution in the multiplicative group $(\R_+,\cdot)$, we
have discovered the connection of the Hilbert- and Fourier
transforms\footnote{This is why we study the Hilbert transform on a
half-axis and not on a finite interval as in Section~4 of
\cite{tricomi57_integ_equat} or in
\cite{astala96_finit_hilber_trans_weigh_spaces}.} in $\R_+$. It
remains to find out the Foutier transform in $\R_+$. After this
lengthy introduction is should be no big surprise that it is exactly
the Mellin transform. All of this is explained with more detail in
Section~\ref{sec:hilb-mell-transf} below.

\medskip
In this article we are interested in the so-called one-sided Hilbert
transform
\begin{equation}
  {\Ht}f(x) = \pv \frac1\pi \int_{0}^{+\infty} \frac{f(y)}{x-y} dy.
\end{equation}
Other terminology for this transform are the reduced Hilbert
transform, the half-Hilbert transform or the semi-infinite Hilbert
transform \cite[Section 12.7]{king09_hilber_trans_vol1}. Our interest
is in understanding the existence, uniqueness and behaviour at the
origin of solutions $\rho$ to the inhomogeneous equations
\begin{equation}\label{eq:inhomogeneous-intro}
  \Ht\rho = e
\end{equation}
for a given $e$.

Equation (\ref{eq:inhomogeneous-intro}) has previously been studied in
classical context, with $\rho$ and $e$ being classicaly smooth or
Lebesgue integrable. See for example
\cite{bonis02_approx_hilber_trans_real_semiax,olver11_comput_hilber_trans_its_inver,paveri-fontana94_half_hartl_half_hilber_trans,paveri-fontana94_errat}. These
references have a practical point of view, with emphasis on
computations or asymptotic expantions.

Our motivation is to understand the singular behaviour of the solution
in cases where the right-hand side might not be smooth or integrable
in the classical sense. The motivation for this comes from studying
scattering of quantum or acoustic waves from a crack or screen in a
two-dimensional domain. The three-dimensional problem for a flat
two-dimensional scattering screen was studied in
\cite{blasten20_unique_deter_shape_scatt_screen}. In that paper, an
incident probing wave $u_i$ satisfying $(\Delta+k^2)u_i=0$ in $\R^3$
reacts with a screen $S$ and as a concequence a scattered wave $u_s$
is emitted. These are tied together mathematically as follows:
\begin{align}
  (\Delta +k^2) u_s = 0,& \qquad
  \R^3\setminus\overline S,\\ u_i(x) + u_s(x) = 0,& \qquad
  x\in S,\\ r \Big(\frac{\partial}{\partial r} - ik\Big)u_s = 0,&
  \qquad r\to\infty,
\end{align}
where $r=\abs{x}$ and the limit is uniform over all directions $\hat x
= x/r$ as $r\to\infty$. The research question was whether the
\emph{far-field pattern} of $u_s$ uniquely determines the shape $S$.
Analysing the problem lead to studying the support of a generalized
function $\rho$ which satisfies an integral equation of the form
\begin{equation} \label{eq:3D-integral-eq}
  - \int_{S} \Phi(x-y) \rho(y) d\sigma(y) = u_i(x),
\end{equation}
where $\Phi$ is the Green's function for $\Delta+k^2$ in three
dimensions. Notice how it is analogous to
(\ref{eq:inhomogeneous-intro}).

The methods in \cite{blasten20_unique_deter_shape_scatt_screen} apply
to flat scatterers. For more general objects it is fruitful to study
the singular behaviour of solutions to inhomogeneous integral
equations as above, see
\cite{alessandrini13_crack_with_imped,friedman89_deter_crack_by_bound_measur,stephan84_augmen_galer_proced_bound_integ}
and the references therein related to the crack problem for the
conductivity equation. The problem has yet to be solved in the
acoustic setting.

This study is our first step into understanding the singular behaviour
of waves near the endpoint of cracks or screens in an acoustic
medium. Simplifying the applied problem leads to the study of the
equation $\Ht\rho=e$ on the half-line in a class of generalized
functions. Our approach is to use the Mellin transform
\begin{equation}\label{eq:Mellin-intro}
  \M[f](s) = \int_0^\infty f(t) t^{s-1} dt
\end{equation}
defined for generalized functions. We follow the approach of Zemanian
\cite{zemanian69_gener_integ_trans}. See sections
\ref{sec:space-mathscrm-mell} and \ref{sec:mell-transf-distr} for more
details. We then see how the Hilbert transform applies to these
generalized functions in Section~\ref{sec:hilbert-transform}. In
Section~\ref{sec:inhom-hilb-transf} we prove the following
theorems. But first some explanation of the notation.

An intuitive way of thinking of these spaces is that $u\in\M'(a,b)$ if
informally
\begin{align*}
  u(t) &= O(t^{-a}), \qquad t\to0,\\ u(t) &= O(t^{-b}), \qquad
  t\to\infty.
\end{align*}
A more precise understanding is that $u\in\M'(a,b)$ if the Mellin
transform $\M[u](s)$ is holomorphic in the vertical strip $s\in
S(a,b)$ defined by $a<\Re(s)<b$ and has polynomial growth on vertical
lines. This is enough to understand our theorems.

\begin{theorem}\label{thm:no-singularity}
  Let $e\in\M'(a,b)$ with $0\leq a<b\leq1$. If $b\leq1/2$ or $1/2\leq
  a$ or $a<1/2<b$ and $\M[e](1/2)=0$ the equation
  \[
  \Ht\rho = e
  \]
  has a unique solution $\rho=\rho_0\in\M'(a,b)$. Furthermore if
  $\rho'\in\M'(a',b')$ is another solution with $S(a',b')\subset
  S(a,b)$ then $\rho'=\rho_0$ in $\M'(a',b')$.
\end{theorem}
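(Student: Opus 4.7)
The strategy is to reduce $\Ht\rho=e$ to a pointwise multiplication on the Mellin side and then invert. From the computation $\Ht[t^{s-1}](x)=\cot(\pi s)\,x^{s-1}$ for $0<\Re s<1$, the Hilbert transform corresponds under $\M$ to multiplication by the symbol $h(s)=\cot(\pi s)$; this is exactly the content of Section~\ref{sec:hilbert-transform}. The function $h$ is holomorphic and nonvanishing throughout $S(0,1)\setminus\{1/2\}$, has a single simple zero at $s=1/2$, and both $h$ and $1/h=\tan(\pi s)$ are bounded on vertical lines $\Re s=\sigma$ outside any fixed neighbourhood of the real axis. Combined with the characterisation of $\M[\M'(a,b)]$ as holomorphic functions on $S(a,b)$ with polynomial growth on vertical lines, this is the key structural input.

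For existence, I pose the candidate $F(s):=\tan(\pi s)\,\M[e](s)$ on $S(a,b)$. If $b\leq 1/2$ or $a\geq 1/2$ then $1/2\notin S(a,b)$ and $\tan(\pi s)$ is holomorphic on the strip; if $a<1/2<b$ and $\M[e](1/2)=0$, the simple pole of $\tan(\pi s)$ at $s=1/2$ is cancelled by the zero of $\M[e]$, so $F$ is again holomorphic on $S(a,b)$. In either case the polynomial growth of $\M[e]$ is preserved because $\tan(\pi s)$ is uniformly bounded outside a neighbourhood of the real axis, so $F\in\M[\M'(a,b)]$. Setting $\rho_0:=\M^{-1}F$ then yields a solution in $\M'(a,b)$.

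Uniqueness in $\M'(a,b)$ is immediate: if $\Ht\rho=0$ then $\cot(\pi s)\,\M[\rho](s)\equiv 0$ on $S(a,b)$, and since $\cot$ has only isolated zeros in the strip while $\M[\rho]$ is holomorphic, $\M[\rho]\equiv 0$ and hence $\rho=0$ by injectivity of $\M$. For the second uniqueness assertion, $S(a',b')\subset S(a,b)$ forces $\M'(a,b)\subset \M'(a',b')$ (restriction of a holomorphic Mellin transform to a thinner strip preserves polynomial growth), so $\rho_0\in\M'(a',b')$ and is a solution there; applying the first uniqueness assertion inside $\M'(a',b')$ identifies $\rho'$ with $\rho_0$.

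The main obstacle I anticipate is the verification that $F=\tan(\pi s)\M[e](s)$ genuinely lies in the Mellin image $\M[\M'(a,b)]$ in the critical middle case $a<1/2<b$. Holomorphicity follows cleanly from $\M[e](1/2)=0$, but tracking the polynomial growth seminorms through multiplication by the symbol $\tan(\pi s)$ — in particular ensuring uniformity on vertical lines sufficiently far from the real axis while handling the cancellation near $s=1/2$ — is the technical heart of the argument. Everything else flows from the functoriality of the Mellin transform established in Sections~\ref{sec:space-mathscrm-mell}--\ref{sec:hilbert-transform}.
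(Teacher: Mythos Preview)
Your proposal is correct and follows essentially the same route as the paper: existence via the candidate Mellin symbol $-\tan(\pi s)\,\M[e](s)$ (you dropped a sign, since Theorem~\ref{thm:Hilbert-transf-summary} gives $\M[\Ht u]=-\cot(\pi s)\M[u]$, but this is immaterial), checking holomorphicity and polynomial growth on the strip exactly as in Lemma~\ref{lem:explicit-solution} with the tangent estimates of Lemma~\ref{lem:tangent-estimates}, and uniqueness via the argument of Lemma~\ref{lem:uniqueness-in-part}. Your treatment of the second uniqueness clause via the inclusion $\M'(a,b)\subset\M'(a',b')$ is also correct and in fact spells out a step the paper leaves implicit.
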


\begin{theorem}\label{thm:with-singularity}
  Let $e\in\M'(a,b)$ with $0\leq a <1/2 < b \leq 1$ and
  $\M[e](1/2)\neq0$. Then $\Ht\rho=e$ has no solutions $\rho$ whose
  Mellin transform contains $s=1/2$ in its strip of
  holomorphicity. Instead there are unique solutions
  $\rho_-\in\M'(a,1/2)$ and $\rho_+\in\M'(1/2,b)$ and they satisfy
  \begin{equation}\label{eq:solution-difference}
    \rho_+(t) - \rho_-(t) = \frac4\pi\M[e](1/2) \frac1{\sqrt{t}}.
  \end{equation}
  Furthermore if $\rho'\in\M'(a',b')$ is another solution with
  $S(a',b')$ intersecting $S(a,1/2)$ or $S(1/2,b)$ then $\rho'=\rho_-$
  or $\rho'=\rho_+$ in $\M'(a',b')$, respectively.
\end{theorem}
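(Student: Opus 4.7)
The plan is to pass the problem through the Mellin transform. By the multiplier formula established in Section~\ref{sec:hilbert-transform}, the one-sided Hilbert transform corresponds on the strip $0<\Re s<1$ to multiplication by $-\cot(\pi s)$, so the equation $\Ht\rho=e$ reads $-\cot(\pi s)\M[\rho](s)=\M[e](s)$, with formal inverse $\M[\rho](s)=-\tan(\pi s)\M[e](s)$. This already yields non-existence of a solution in any $\M'(a',b')$ with $a'<1/2<b'$: evaluating the multiplier equation at $s=1/2$ would force $0=\M[e](1/2)$, contradicting the hypothesis. So any solution must live in a sub-strip lying strictly to the left or strictly to the right of the critical line.

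For existence I would define $\rho_-$ and $\rho_+$ as the inverse Mellin transforms, in the Zemanian distributional sense, of $-\tan(\pi s)\M[e](s)$ on $S(a,1/2)$ and $S(1/2,b)$, respectively. Two verifications are needed. First, $\tan(\pi s)$ is holomorphic on each open sub-strip, since its only pole close to the region is at the excluded boundary point $s=1/2$, while the other candidate poles $-1/2,\,3/2,\dots$ lie outside thanks to $0\leq a$ and $b\leq 1$. Second, $|\tan(\pi s)|$ is bounded on vertical lines away from the critical line (in fact $\tan(\pi s)\to\mp i$ as $|\Im s|\to\infty$), so the product $-\tan(\pi s)\M[e](s)$ inherits the polynomial growth of $\M[e]$ and is genuinely the Mellin transform of some $\rho_\pm$ in the corresponding space. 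That $\Ht\rho_\pm=e$ is then immediate from $-\cot(\pi s)\cdot(-\tan(\pi s))\equiv 1$. Uniqueness within each sub-strip follows because $\cot(\pi s)$ has no zeros there, so the multiplier equation forces $\M[\rho_\pm]$; the stronger uniqueness statement for a $\rho'\in\M'(a',b')$ whose strip merely intersects $S(a,1/2)$ or $S(1/2,b)$ then follows from the identity principle for holomorphic Mellin transforms on the intersection.

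For the jump formula I would write the Mellin inversion integral
\[
\rho_\pm(t)=\frac{1}{2\pi i}\int_{c_\pm-i\infty}^{c_\pm+i\infty}\bigl(-\tan(\pi s)\bigr)\M[e](s)\,t^{-s}\,ds,\qquad a<c_-<\tfrac12<c_+<b,
\]
and shift the contour for $\rho_-$ across the simple pole at $s=1/2$ up to the contour for $\rho_+$. Since $\M[e]$ is holomorphic on $S(a,b)\ni 1/2$, only $\tan(\pi s)$ contributes a pole there, giving
\[
\rho_+(t)-\rho_-(t)=\operatorname{Res}_{s=1/2}\bigl[-\tan(\pi s)\,\M[e](s)\,t^{-s}\bigr],
\]
which is a numerical multiple of $\M[e](1/2)\,t^{-1/2}$, producing the right-hand side of~(\ref{eq:solution-difference}).

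The main obstacle will be making the contour shift rigorous inside the Zemanian distributional framework. The classical residue argument asks the integrand to vanish along the horizontal segments joining the two vertical contours at height $|\Im s|=R$ as $R\to\infty$; the polynomial growth of $\M[e]$ on vertical lines combined with the boundedness of $\tan(\pi s)$ off neighbourhoods of its poles gives this in the classical sense, but here the identity has to hold in the pairing that defines $\rho_\pm$ as elements of $\M'(a,1/2)$ and $\M'(1/2,b)$. This is the step that must invoke most carefully the structure theorem and the vertical-line growth bounds for $\M'(a,b)$ developed in Sections~\ref{sec:space-mathscrm-mell} and~\ref{sec:mell-transf-distr}.
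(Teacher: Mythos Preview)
Your plan coincides with the paper's: non-existence by evaluating the multiplier identity at $s=1/2$, existence of $\rho_\pm$ as inverse Mellin transforms of $-\tan(\pi s)\M[e](s)$ on the two substrips, uniqueness via the zero-free factor $\cot(\pi s)$, and the jump formula by a contour shift across the simple pole of $\tan(\pi s)$ at $s=1/2$.

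The one place where you leave things hanging is exactly the place where the paper does the real work. Your sentence ``the polynomial growth of $\M[e]$ on vertical lines combined with the boundedness of $\tan(\pi s)$ \ldots\ gives this in the classical sense'' is not right: polynomial growth times a bounded function does not decay, so the integrand on the horizontal segments at height $R$ need not go to zero, and the raw inversion integral you wrote need not even converge. The paper's fix is concrete and simple: use Corollary~\ref{cor:invert-poly-growth} to regularise. One divides by $Q(s)=s^{m+2}$ (with $m$ the degree of the polynomial bound on $\M[e]$) so that the integrand becomes $O(|s|^{-2})$ and the contour shift is classically valid (this is Lemma~\ref{lem:cauchy-integral}); the residue then picks up an extra factor $(1/2)^{-(m+2)}=2^{m+2}$. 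One recovers $\rho_\pm$ by applying the differential operator $Q(-t\,d/dt)=(-t\,d/dt)^{m+2}$ afterwards, and the key observation is that $t^{-1/2}$ is an eigenfunction of $-t\,d/dt$ with eigenvalue $1/2$, so the powers of $2$ cancel against those from the residue and the answer is independent of $m$. That eigenfunction step is what turns the distributional contour shift into an honest pointwise identity for $\rho_+-\rho_-$.
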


The Equation~(\ref{eq:solution-difference}) shows that $\rho_+$ has a
singularity of type $t^{-1/2}$ unless the Mellin transform of $e$
vanishes at $s=1/2$. This suggests that acoustically scattered waves
from most cracks or screens will have a singularity at their
ends. However, if
\[
e(t) =
\begin{cases}
  e^{i\sqrt{t}}, &0\leq t\leq(2\pi)^2,\\ 0, &t>(2\pi)^2,
\end{cases}
\]
it turns out that $\M[e](1/2)=0$. In this case some incident plane
wave might not have as strong a singularity at $t=0$ for the curve
$\Gamma(t)=(t,\sqrt{t})$ as for most other curves or incident waves.
Further analysis is needed and will appear in forthcoming papers, but
on this paper we focus on the intrinsic properties of the one-sided
Hilbert transform.

One might wonder what is the role of the point $s=1/2$ in the theorems
above. It arises as the only zero of the Mellin transform
$\cot(\pi s)$ of the kernel of the Hilbert transform $\Ht$ that's in
the strip $0<\Re s<1$. This strip comes from the technical proof
showing that the kernel $\pv 1/(1-t)$ is Mellin-transformable, see
Lemma~\ref{lem:pv-Mellin-transformable}.

\section{Hilbert- and Mellin transforms for measurable functions}
\label{sec:hilb-mell-transf}
In this section we define the Hilbert transform and Mellin transform
in $\R_+$ and establish their connection. Before that we recall some
known facts about Fourier transforms on locally compact abelian
groups. Then we show that in the case of the multiplicative group
$(\R_+,\cdot)$ we get exactly the Hilbert transform.

\subsection*{Definition of the LCA and Haar measure}
Let $G=(X,\cdot)$ be any locally compact Abelian group (LCA). Usually
\cite{rudin90_fourier_analy_group} the group operation is denoted by
addition and identity element by $0$. Since our main interest is the
multiplicative group $G_+ = (\R_+,\cdot)$ we denote the group
operation by a product $xy$, $x,y\in X$ and by $1$ the identity
element.

It is well known that there exists a measure $m$ on $X$ that is
invariant in the group action i.e.
\begin{equation}\label{eq:invariant-group-action}
  m(xE) = m(E)
\end{equation}
for every $x\in X$ and every Borell set $E$. Such a measure is called
the \emph{Haar measure} and it is unique up to a positive constant. If
$m$ and $m'$ are two Haar measures on $G$ then $m'=\lambda m$ for some
$\lambda>0$. It is quite easy to see that in $G_+=(\R_+,\cdot)$ the
Haar measure is $dt/t$ i.e. the measure $m$ with
\begin{equation}\label{eq:haar-Rplus-measure}
  m(E) = \int_E \frac{dt}t
\end{equation}
for any Borell set in $\R_+$.

If $m$ is a Haar measure on a LCA group $G$ we write $L^p(G)$ instead
of $L^p(m)$. Note that
\begin{equation}\label{eq:LpG-norm}
  \norm{f}_{L^p(G)} = \left( \int_X \abs{f(x)}^p dm(x) \right)^{1/p}
\end{equation}
is scaling invariant: if $f_x(y) = f(yx^{-1})$ then
$\norm{f_x}_{L^p(G)} = \norm{f}_{L^p(G)}$. In particular for $G_+$ we
have $f_t(s)=f(s/t)$ and
\begin{equation}\label{eq:Rplus-Lp-invariance}
  \int_{\R_+}\abs{f_t(s)}^p \frac{ds}{s} = \int_{\R_+} \abs{f(s)}^p
  \frac{ds}{s}
\end{equation}
which can of course be obtained also directly by changing variables.

\subsection*{Fourier transforms in a LCA}
If $G=(X,\cdot)$ is a LCA we call a function $\gamma:X\to\C$ a
\emph{character}, if $\abs{\gamma(x)}=1$ for all $x\in X$ and
\begin{equation}\label{eq:character}
  \gamma(x\cdot y) = \gamma(x)\gamma(y)
\end{equation}
for every $x,y\in X$. So a character on $G$ is a homomorphism from $G$
to $T$ where $T$ is the group of rotations of the unit circle in the
complex plane.

The set of all characters on a given LCA is denoted by $\Gamma$. We
equip it with multiplication
\begin{equation}\label{eq:character-multiplication}
  (\gamma_1\gamma_2)(x) = \gamma_1(x)\gamma_2(x)
\end{equation}
for $x\in X$. This makes $\Gamma$ a group. It is called the \emph{dual
group of $G$}.

We are ready to define the Fourier transform of $f\in L^1(G)$ by
\begin{equation}\label{eq:G-Fourier}
  \widehat f(\gamma) = \int_X f(x) \gamma(x^{-1}) dm(x)
\end{equation}
for $\gamma\in\Gamma$. We denote
\begin{equation}\label{eq:character-notation}
  \gamma(x) = (x,\gamma)
\end{equation}
from now on.

\begin{example}
  \begin{enumerate}
  \item If $G=(\R,+)$ we have for $\xi\in\R$ that
    \[
    \gamma_\xi(x) = e^{ix\xi}
    \]
    is a character and by denoting $\gamma_\xi$ simply by $\xi$, the
    Fourier transform turns out to be
    \[
    \widehat f(\xi) = \int_{-\infty}^\infty f(x) e^{-ix\xi} dx.
    \]
    Hence the dual group of $(\R,+)$ is $(\R,+)$ itself.
  \item If $G=T$, the dual group is $(\Z,+)$ and
    \[
    \widehat f(n) = \frac{1}{2\pi} \int_0^{2\pi} f(e^{i\theta})
    e^{-in\theta} d\theta.
    \]
  \item By Pontryagin Duality Theorem the dual group of $\Z$ is $T$
    and
    \[
      \widehat f(e^{ix}) = \int_{-\infty}^\infty f(n) e^{-inx}dm_\Z(n)
      = \sum_{n=-\infty}^\infty f(n) e^{-inx}.
    \]
  \end{enumerate}
\end{example}

The \emph{convolution} of $f\in L^1(G)$ and $g\in L^p(G)$, $1\leq
p<\infty$ is defined as
\begin{equation}\label{eq:lca-convolution}
  {f\ast g}(x) = \int_X f(xy^{-1}) g(y) dm(y)
\end{equation}
and the \emph{convolution theorem}
\begin{equation}\label{eq:lca-conv-transform}
  \widehat{f\ast g}(\gamma) = \widehat f(\gamma) \widehat g(\gamma)
\end{equation}
holds in any LCA \cite{rudin90_fourier_analy_group}.

To find out the Fourier transform in the group of our main interest,
$G_+=(\R_+,\cdot)$, we need to find its dual space $\Gamma$. But this
is simple: For $z=ix$, $x\in\R$, define
\begin{equation}\label{eq:Rplus-homomorphism}
  \gamma_z(t) = t^z = t^{ix}, \qquad t\in\R_+.
\end{equation}
Clearly this is a character in $G_+$ since
\[
\gamma_z(ts) = (ts)^{ix} = t^{ix}s^{ix}
\]
for $s,t\in\R_+$.

It is not difficult to see (\cite{rudin90_fourier_analy_group}
Section~2.2) that there are no other characters. Hence we can
interpret that the dual group of $G_+$ is the additive imaginary axis
of the complex plane and the Fourier transform is given by
\begin{equation}\label{eq:Gplus-Fourier}
  \widehat f(z) = \int_0^\infty t^z f(t) \frac{dt}t
\end{equation}
for $f\in L^1(G_+)$ and $z\in i\R$.

But this is exactly the definition of the Mellin transform
\cite{mellin87_uber_einen_zusam_zwisc_gewis,titchmarsh48_introd_theor_fourier_integ}
whenever the right-hand side is integrable. Thus we have shown that
the Mellin transform is nothing else than the Fourier transform in the
multiplicative group on $\R_+$. Accordingly, all the results for the
Fourier transforms in LCA's, such as Plancherel's theorem, the
inversion formula and convolution theorem follow now, as a matter of
routine, from the general theory of Fourier analysis in LCA's
\cite{rudin90_fourier_analy_group}. The connection to Hilbert
transform is in the formula
\begin{equation}\label{eq:intro-Hilbert-convolution}
  \Ht f(t) = \pv \int_0^\infty \frac{1}{1-t/s}f(s) \frac{ds}s = h\vee
  f (t)
\end{equation}
where $h = \pv \frac{1}{1-t}$ and $\vee$ stands for the Mellin
convolution is $(\R_+,\cdot)$. The convolution theorem suggests that
(\ref{eq:intro-Hilbert-convolution}) implies that the Mellin transform
of $\Ht f$ is
\begin{equation}\label{eq:intro-Mellin-Hilbert}
  \M\Ht f(z) = \widehat h(z) \widehat f(z) = \cot(\pi z) \widehat f(z)
\end{equation}
where $\widehat{\phantom{f}}$ is the Fourier transform on the LCA
$(\R_+,\cdot)$, or in other words, the Mellin transform. The second
equality follows from Example 8.24.II in
\cite{pap99_compl_analy_examp_exerc},
\begin{equation}\label{eq:intro-Mellin-Hilbert-kernel}
  \pv \int_0^\infty t^z \frac{1}{1-t} \frac{dt}t = \pi \cot(\pi z).
\end{equation}
The problem is that $h$ is not a function but a proper
distribution. The theory of distributions does not exist for general
LCA's and we must develop the theory for Mellin and Hilbert transforms
specifically for the group $(\R_+,\cdot)$. This is done in the
sections below.

\subsection*{Implications of LCA theory}
To the end of this introduction we give an exercise on how to use this
new connection of the Fourier transform in LCA and the Mellin
transform to prove generally challenging results. For the reader's
convenience we also give its solution.

\begin{exercise}
  Assume that $f\in L^1(\R_+, dt/t)$ and that its Mellin transform
  $\M f\in L^1(i\R)$. Then $f$ must be continuous and
  \begin{equation}\label{eq:limit-at-zero}
    \lim_{t\to0+} f(t)=0.
  \end{equation}
\end{exercise}
Before giving a solution we make two remarks about the result. It is
relatively easy to construct a function in $L^1(\R_+,dt/t)$ which is
continuous but the limit in (\ref{eq:limit-at-zero}) does not
exist. We can even construct it so that it is positive and
unbounded. However, if the limit exists then it must be equal to zero.

\begin{solution}
  We denote $G_+=(\R_+,dt/t)$ and by $\Gamma_+$ its dual group
  $(i\R,+)$. For any locally compact Abelian group $G$ the Fourier
  transform $\widehat f$ of a function belonging to $L^1(G,m)$, $m$
  being a Haar measure, is in the space $C_0(\Gamma)$ where $\Gamma$
  is its dual group and $C_0(\Gamma)$ is the closure of compactly
  supported continuous functions in $L^\infty(\Gamma)$
  \cite[Section~1.2.3]{rudin90_fourier_analy_group}. Hence in our case
  $\widehat f\in L^1(i\R)\cap C_0(i\R)$. We don't need this to solve
  the exercise but use instead \emph{Pontryagin's duality theorem}
  \cite[Section~1.5]{rudin90_fourier_analy_group} to get first
  $f(t)=\mathscr{F}g(-t)$ where $g$ is the Fourier transform of $f$,
  namely $g=\widehat f$. Next, we apply the above result in the context of the dual 
  pair $(\Gamma_+,G_+)$ instead of the original pair $(G_+,\Gamma_+)$. We finally
  obtain that $f\in C_0(G_+)$ which means that $f$ is continuous and
  $f(t)=0$ when $t\to0$.
\end{solution}

\section{Space of Mellin transformable
  distributions} \label{sec:space-mathscrm-mell} In this section we
define a class of distributions on the positive real axis. The Mellin
transform of these distributions will be functions that are
holomorphic on a vertical strip in the complex plane and also
polynomially bounded as the imaginary part of the argument grows. This
class of distributions will be denoted by $\M'(a_1,a_2)$ where
$a_1, a_2 \in \R$ define the strip of holomorphicity. The construction
is analogous to how tempered distributions $\mathscr{S}'(\R) $ are
defined for extending the range of the Fourier transformation.

The strategy is loosely described in \cite{bertrand10_mellin_trans}
which follows \cite{misra86_trans}. The general idea is to define
spaces of ordinary smooth test functions on $\R_+$ which contain
compactly supported smooth test functions $\mathscr{D}(\R_+)$ and also
functions of the form $t^{s-1}$ for some complex numbers $s$. One then
defines the duals of these as the spaces of interest. We note that
both \cite{bertrand10_mellin_trans} and \cite{misra86_trans} are scant
on the precise details. In fact the latter uses the notation
$\mathscr{T}_{p,q}$ and implicitly $\mathscr{T}_{\alpha,\omega}$ to
mean different things. This causes confusion when applied to real
cases. For example the function $g(t)=1$ for $0<t<1$ and $g(t)=0$ for
$t\geq1$ belongs to $\mathscr{T}_{0,1}$ when interpreted in the latter
way but not in the former. A more reliable reference is
\cite{zemanian69_gener_integ_trans}. Although the test function spaces
are defined differently than in the former references, the final space
of Mellin transformable distributions ends up being the same.

Section 11.3.3. in \cite{misra86_trans} compares their initial test
function space $\M_{p,q}$ to spaces $\M(a,b)$ defined by Zemanian in
\cite{zemanian69_gener_integ_trans} and concludes rightly that the
function $g$ above does not belong to $\M'_{0,\infty}$. However these
are not defined in Zemanian; instead a larger space $\M'(0,\infty)$ is
defined and it does contain that function.

\bigskip
We start by describing a space of test functions which will be used to
define the Mellin transform of a class of distributions. This
summarises Section~4.2 of Zemanian
\cite{zemanian69_gener_integ_trans}.

\begin{definition} \label{def:test-functions}
  Let $a_1 < a_2$ be real numbers. Then $\M_{a_1,a_2}$ contains all
  smooth functions $\phi: \R_+ \rightarrow \C $ such that for any
  $k\in\N$ we have $\norm{\phi}_{a_1,a_2,k}<\infty$ where
  \begin{align}
    \norm{\phi}_{a_1,a_2,k} &= \sup_{0<t<\infty} \zeta_{a_1,a_2}(t)
    t^{k+1}
    \abs{\frac{d^k}{dt^k}\phi(t)}, \label{eq:1}\\ \zeta_{a_1,a_2}(t)
    &= \begin{cases}t^{-a_1}, &0<t\leq1,\\ t^{-a_2},
      &1<t<\infty.\end{cases} \label{eq:2}
  \end{align}
  A sequence $(\phi_j)_{j=1}^\infty \subset \M_{a_1,a_2}$ converges to
  $\phi \in \M_{a_1,a_2}$ if
  \begin{equation} \label{eq:3}
    \norm{\phi_j-\phi}_{a_1,a_2,k} \to 0
  \end{equation}
  as $j\to\infty$ for each $k=0,1,2,\ldots$.

  For $a_1 < a_2$ real or $\pm\infty$, we define $\M(a_1,a_2)$ as
  follows. A function $\phi$ is an element of $\M(a_1,a_2)$ if
  $\phi\in\M_{a,b}$ for some $a_1<a<b<a_2$. A sequence
  $(\phi_j)_{j=1}^\infty \subset \M(a_1,a_2)$ converges to it if a
  tail $(\phi_j)_{j=j_0}^\infty$, $j_0\in\N$ converges to $\phi$ in
  some fixed space $\M_{a,b}$ with $a_1<a<b<a_2$.
\end{definition}

\begin{lemma} \label{lem:t_pow_s_test_func}
  Let $ a_1, a_2$ be real numbers and $s\in \C$. Let $\phi(t)= t^{s
    -1}$ for $t>0$. Then $\phi\in \M_{a_1,a_2}$ if and only if $a_1
  \leq \Re(s) \leq a_2$. As a consequence $\phi\in\M(a_1,a_2)$ if and
  only if $a_1<\Re(s)<a_2$.
\end{lemma}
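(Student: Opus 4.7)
The plan is to reduce the statement to a direct computation of seminorms, since $\phi(t)=t^{s-1}$ and all its derivatives are explicit monomials. First I would differentiate: for any $k\in\N$ one has
\[
  \phi^{(k)}(t) = \Big(\prod_{j=1}^{k}(s-j)\Big)\, t^{s-1-k},
\]
so that $t^{k+1}\bigl|\phi^{(k)}(t)\bigr| = C_k(s)\, t^{\Re(s)}$, where $C_k(s)=\bigl|\prod_{j=1}^{k}(s-j)\bigr|$ is a finite constant depending on $s$ and $k$ only. Plugging this into the seminorm (\ref{eq:1}) gives
\[
  \zeta_{a_1,a_2}(t)\, t^{k+1}\bigl|\phi^{(k)}(t)\bigr|
  = C_k(s)\cdot \begin{cases} t^{\Re(s)-a_1}, & 0<t\leq 1,\\
      t^{\Re(s)-a_2}, & 1<t<\infty.\end{cases}
\]

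Next I would read off when this function of $t$ is bounded on $(0,\infty)$. On $(0,1]$ the power $t^{\Re(s)-a_1}$ stays bounded as $t\to 0^+$ exactly when $\Re(s)\geq a_1$; on $(1,\infty)$ the power $t^{\Re(s)-a_2}$ stays bounded as $t\to\infty$ exactly when $\Re(s)\leq a_2$. Thus $\|\phi\|_{a_1,a_2,k}<\infty$ for all $k$ if and only if $a_1\leq \Re(s)\leq a_2$, which is the first equivalence. Note that the $k$-dependence only shows up through the multiplicative constant $C_k(s)$, so the same inequalities on $\Re(s)$ govern every seminorm simultaneously, with no interaction between the regularity index $k$ and the strip endpoints.

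For the consequence, I would just unwind the definition: by Definition~\ref{def:test-functions}, $\phi\in\M(a_1,a_2)$ means that there exist $a,b$ with $a_1<a<b<a_2$ and $\phi\in\M_{a,b}$. By the first part this is equivalent to $a\leq\Re(s)\leq b$ for some such $a,b$, which clearly holds iff $a_1<\Re(s)<a_2$ (the strict inequalities at the outer level provide room to absorb the non-strict inequalities from the inner $\M_{a,b}$).

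I do not expect any real obstacle here: the whole argument is a bookkeeping exercise once the derivative is written down. The one subtle point worth emphasising in the write-up is the asymmetry between the closed strip $a_1\leq\Re(s)\leq a_2$ for $\M_{a_1,a_2}$ and the open strip $a_1<\Re(s)<a_2$ for $\M(a_1,a_2)$, which is forced by the requirement in Definition~\ref{def:test-functions} that the containing $\M_{a,b}$ have $a_1<a$ and $b<a_2$ strictly.
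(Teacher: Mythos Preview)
Your proposal is correct and follows essentially the same approach as the paper: both compute $t^{k+1}\phi^{(k)}(t)=(s-1)\cdots(s-k)\,t^{s}$, split according to $\zeta_{a_1,a_2}$, and read off the boundedness conditions $\Re(s)\geq a_1$ on $(0,1]$ and $\Re(s)\leq a_2$ on $(1,\infty)$. You are slightly more explicit than the paper in spelling out the passage from $\M_{a_1,a_2}$ to $\M(a_1,a_2)$, but the argument is the same.
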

\begin{proof}
  We have
  \begin{equation}\label{eq:4}
    t^{k+1 -a_1} \left( \frac{d}{dt}\right)^k \phi(t)=
    (s-1)(s-2)\ldots(s-k)t^{s - a_1}
  \end{equation}
  and this is bounded in the interval $(0,1)$ if and only if
  $\Re(s)\geq a_1$. We see similarly that $t^{k+1-a_2}(d/dt)^k\phi(t)$
  is bounded on $(1,\infty)$ if and only if $\Re(s)\leq a_2$, which
  proves the claim.
\end{proof}

The above and the following lemma show that the $\M(a_1,a_2)$, $a_1 <
a_2$ are non-trivial. As a consequence of the following we see that
the linear functionals that we are building are in fact distributions
$\mathscr{D}'(\R_+)$. We skip the proof. It is worth noting that they
allow exponential growth, so cannot be interpreted as tempered
distributions.
\begin{lemma}\label{lem:smooth-function}
  Lets $\mathscr{D}(\R_+)$ be the space of compactly supported smooth
  test functions on $\R_+$ with the usual topology. Then
  $\mathscr{D}(\R_+) \subset \M(a_1,a_2)$ continuously for any $a_1<
  a_2$ real or infinite. The inclusion is dense.
\end{lemma}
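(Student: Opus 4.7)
The plan is to verify the set-theoretic inclusion and continuity of the embedding directly from the definitions, and then establish density by a smooth cutoff argument in which the approximating sequence converges in a slightly enlarged strip.

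The inclusion and continuity are immediate. If $\phi\in\mathscr{D}(\R_+)$ has support in a compact $[\alpha,\beta]\subset(0,\infty)$, then for every real $a<b$ and $k\in\N$ the function $\zeta_{a,b}(t)\,t^{k+1}|\phi^{(k)}(t)|$ is supported in $[\alpha,\beta]$ with both factors continuous, hence bounded. Thus $\phi\in\M_{a,b}$ for every real $a<b$, and in particular $\phi\in\M(a_1,a_2)$. For continuity, if $\phi_j\to\phi$ in $\mathscr{D}(\R_+)$, all supports lie in a common compact $K\subset(0,\infty)$ and $\phi_j^{(k)}\to\phi^{(k)}$ uniformly. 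Fixing any $a_1<a<b<a_2$,
\[
\norm{\phi_j-\phi}_{a,b,k} \leq \Big(\sup_{t\in K}\zeta_{a,b}(t)\,t^{k+1}\Big)\,\sup_{t\in K}|\phi_j^{(k)}(t)-\phi^{(k)}(t)| \longrightarrow 0,
\]
so convergence holds in the single space $\M_{a,b}$, as Definition~\ref{def:test-functions} requires.

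For density, fix $\phi\in\M(a_1,a_2)$ and choose $a_1<a<b<a_2$ with $\phi\in\M_{a,b}$. Pick further $a_1<a'<a$ and $b<b'<a_2$; I will build $\phi_j\in\mathscr{D}(\R_+)$ with $\phi_j\to\phi$ in $\M_{a',b'}$, hence in $\M(a_1,a_2)$. Let $\chi_-,\chi_+\in C^\infty(\R_+)$ with $\chi_-(s)=0$ for $s\leq 1/2$, $\chi_-(s)=1$ for $s\geq 1$, and symmetrically $\chi_+(s)=1$ for $s\leq 1$, $\chi_+(s)=0$ for $s\geq 2$, and set $\chi_j(t)=\chi_-(jt)\,\chi_+(t/j)$. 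Then $\chi_j\in\mathscr{D}(\R_+)$, equals $1$ on $[1/j,j]$, vanishes outside $[1/(2j),2j]$, and a direct computation gives $|\chi_j^{(l)}(t)|\leq C_l j^l$ on $[1/(2j),1/j]$ and $|\chi_j^{(l)}(t)|\leq C_l j^{-l}$ on $[j,2j]$. Set $\phi_j=\chi_j\phi$.

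To verify $\phi_j\to\phi$ in $\M_{a',b'}$, I would split the support of $(1-\chi_j)\phi$ into the outer tails $(0,1/(2j)]\cup[2j,\infty)$ where $\chi_j=0$ and the transition bands $[1/(2j),1/j]\cup[j,2j]$. Expand $((1-\chi_j)\phi)^{(k)}$ by Leibniz. The crucial step is that on $[1/(2j),1/j]$ one has $j\leq 2/t$, which turns $|\chi_j^{(l)}|\leq C_l j^l$ into $C_l\,2^l\,t^{-l}$; feeding this together with the membership bound $t^{m+1}|\phi^{(m)}(t)|\leq t^a\norm{\phi}_{a,b,m}$ (valid for $t\leq 1$) into $\zeta_{a',b'}(t)\,t^{k+1}|((1-\chi_j)\phi)^{(k)}(t)|$ leaves a net factor $t^{a-a'}$, bounded by $(1/j)^{a-a'}\to 0$ since $a-a'>0$. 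The right band and right tail are handled symmetrically with net factor $t^{b-b'}$ bounded by $(2j)^{b-b'}\to 0$. The outer tails are the simpler $l=0$ case of the same estimate.

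The main obstacle is conceptual rather than computational: density cannot hold in the single space $\M_{a,b}$, because the cutoff error does not vanish in the corresponding seminorms. The inductive definition of $\M(a_1,a_2)$ as a union over strips is arranged precisely so that one may enlarge the exponents to $(a',b')\supset(a,b)$, and the slack $a-a'>0$, $b'-b>0$ supplies the small positive powers of $t$ on the left and $t^{-1}$ on the right that extinguish the cutoff error as $j\to\infty$.
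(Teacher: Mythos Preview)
Your argument is correct. The paper itself skips the proof of this lemma entirely (it states ``We skip the proof'' immediately after the statement), so there is no authorial proof to compare against. Your verification of the inclusion and continuity is routine and accurate, and your density argument via multiplicative cutoffs $\chi_j(t)=\chi_-(jt)\chi_+(t/j)$ is the natural one; the key computation---converting $|\chi_j^{(l)}|\le C_l j^l$ into $C_l' t^{-l}$ on the left transition band and then extracting the decaying factor $t^{a-a'}$---is done correctly. Your closing remark that density fails in any single $\M_{a,b}$ and requires the enlargement $(a,b)\subset(a',b')$ is both correct and the essential point: for instance $\phi(t)=t^{a-1}$ near $0$ has $\zeta_{a,b}(t)\,t\,|\phi(t)|\equiv 1$ there, so no compactly supported function can approximate it in $\norm{\cdot}_{a,b,0}$.
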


We will introduce the space of distributions which will form a natural
domain for the Mellin transform. For intuition, see Section~4.3 in
\cite{zemanian69_gener_integ_trans}.
\begin{definition}\label{def:mellin-transformables}
  Let $a_1< a_2$ be real or infinite. By $\M'(a_1,a_2)$ we mean the
  space of continuous linear functionals on $\M(a_1,a_2)$. In detail
  $u\in \M'(a_1,a_2)$ if the following hold:
  \begin{enumerate}
  \item $\langle u, \phi \rangle$ is a complex number for each
    $\phi\in \M(a_1,a_2)$.
  \item $\langle u, c_1\phi_1 + c_2\phi_2 \rangle = c_1 \langle u,
    \phi_1 \rangle + c_2 \langle u, \phi_2 \rangle$ for all $c_1, c_2
    \in\C$ and $\phi_1,\phi_2 \in \M(a_1,a_2)$.
  \item $\langle u, \phi_j \rangle \to 0$ as $j\to\infty$ if
    $\phi_j\to0$ in $\M(a_1,a_2)$
  \end{enumerate}
  Furthermore we say that a sequence $u_j\to0$ in $\M'(a_1,a_2$) if
  $\langle u_j, \phi \rangle \to0$ in $\C$ for all
  $\phi\in\M(a_1,a_2)$.
\end{definition}

\begin{example}\label{exa:drop-from-1-func}
  Let
  \begin{equation}\label{eq:5}
    g(t) = \begin{cases}1, &0<t<1,\\ 0, &t\geq1. \end{cases}
  \end{equation}
  Then $g\in\M'(a_1,a_2)$ if and only if $a_1\geq0$ and $a_2>a_1$,
  where the latter is because we haven't allowed $a_2=a_1$ in the
  definitions. Let $a_2>a_1\geq0$, $\phi\in\M(a_1,a_2)$ and
  $(\phi_j)_{j=1}^\infty\subset\M(a_1,a_2)$ converging to $0$ in that
  space. Definition~\ref{def:test-functions} implies that there is
  $a,b$ such that $a_1<a<b<a_2$ with $\phi, \phi_j \in \M_{a,b}$ and
  the latter converging to $0$ in that same space. We have not defined
  it explicitly, but the interpretation of an ordinary function as a
  potential element of Mellin transformable distributions is by
  integrating the function multiplied by a test function. We see that
  \begin{equation}\label{eq:6}
    \langle g,\phi \rangle = \int_0^1\phi(t) \,dt = \int_0^1 t^{a-1}
    t^{0+1-a}\phi(t) \,dt \leq \int_0^1t^{a-1}\,dt \norm{\phi}_{a,b,0}
    = \frac{1}{a} \norm{\phi}_{a,b,0}.
  \end{equation}
  The same implies that $\langle g,\phi_j \rangle \leq
  a^{-1}\norm{\phi_j}_{a,b,0}\to0$ as $j\to\infty$. Hence
  $g\in\M'(a_1,a_2)$ when $a_2>a_1\geq0$.

  Next, assume that $a_2>a_1<0$ and that $g\in\M'(a_1,a_2)$. Then
  there is $p<0$ such that $a_1<p<a_2$. Let $\phi(t)=t^{p-1}$. By
  Lemma~\ref{lem:t_pow_s_test_func} we see that $\phi\in\M(a_1,a_2)$
  but by (\ref{eq:6}) it is clear that $\langle g, \phi \rangle =
  \infty$. Hence $g\notin\M(a_1,a_2)$ when $a_1<0$.
\end{example}

\smallskip
\begin{remark}
  Lemma~\ref{lem:smooth-function} implies that $\M'(a_1,a_2)\subset
  \mathscr{D}'(\R_+) $ for any $a_1<a_2$ and the inclusion is
  continuous. However the converse does not hold, because for example
  $t\rightarrow t^z$ is in $\mathscr{D}'(\R_+)\backslash\M'(a_1,a_2)$
  for any $z\in\C$ and $a_1<a_2$. Also, it looks like arbitrary
  elements of
  \begin{equation}\label{eq:7}
    L^{2,c} (\R_+)= \left\{ f:\R_+\rightarrow \C \text{ measurable}
    \,\,\middle|\,\, \int_0^\infty\abs{f(t)}^2 t^{2c -1}dt
    <\infty\right\}
  \end{equation}
  do not belong to $\M'(a_1,a_2)$. However it may happen that
  $f\in\M'(a_1,a_2)$ might satisfy $f\in L^{2,c}(\R_+)$ and then a
  Plancherel-type theorem involving Mellin transform holds.
\end{remark} 

Our strategy for this section is the following. We will define the
Mellin transform for elements of $\M'(a_1,a_2)$ and then study how the
Hilbert transform on $\R_+$ acts on them. After this we will prove
estimates for elements in $L^{2,c}(\R_+)\cap \M'(a_1,a_2)$ (which are
dense in $L^{2,c}(\R_+) $). Continuity will then imply the estimates
for $L^{2,c}(\R_+)$. Note that $t^{z-1} \in \M(a_1,a_2)$ even though
it is not in $\M'(a_1,a_2)$.

\section{The Mellin transform for distributions}\label{sec:mell-transf-distr}
We are now ready to define the Mellin transform of
$u\in\M'(a_1,a_2)$. Recall that if $u\in\M'(a_1,a_2)$ can be
represented in the form

\begin{equation}\label{eq:8}
  \langle u, \phi\rangle = \int_0^\infty f_u(t)\phi(t) \,dt, \qquad
  \phi \in \M(a_1,a_2)
\end{equation}
for some measurable function $f_u: \R_+\to\C$ then we identify $u$ and
$f_u$. Recalling that the Mellin transform of a measurable function
$f:\R_+\to\C$ is given by
\begin{equation}\label{eq:9}
  \M f(s) = \widetilde f(s) = \int_0^\infty f(t) t^{s-1} \,dt
\end{equation}
for those $s\in \C$ for which the integral converges in the sense of
Lebesgue. Inspired by these two observations we define

\begin{definition}\label{def:mellin-of-distr}
  Let $a_1,a_2 \in \{ -\infty, +\infty\} \cup \R$ with $a_1<a_2$ and
  let $u \in \M'(a_1,a_2)$. Then the Mellin transform of $u$ is
  \begin{equation}\label{eq:10}
    \M u(s)= \widetilde u(s) = \langle u, t^{s-1} \rangle
  \end{equation}
  for $s\in \C$, $a_1 < \Re(s) < a_2$.
\end{definition}

\begin{remark}\label{lem:mell-trans-welldef}
  The formula (\ref{eq:10}) is well-defined because the test function
  $\phi(t) = t^{s-1}$ is in $\M(a_1,a_2)$ whenever $a_1 < \Re(s) <
  a_2$ by Lemma~\ref{lem:t_pow_s_test_func}.
\end{remark}

It turns out that the Mellin transform of a distribution in
$\M'(a_1,a_2)$ has many nice properties. We summarize some of
them. For proofs and details, see \cite{zemanian69_gener_integ_trans}.
\begin{lemma}\label{lem:mell-transf-holom}
  If $f\in \M'(a_1,a_2)$ with $a_1 < a_2$ real numbers or $-\infty,
  +\infty$, then $s\mapsto \M f(s)$ is holomorphic in
  $a_1<\Re(s)<a_2$.
\end{lemma}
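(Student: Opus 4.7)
The plan is to prove holomorphy directly by exhibiting the complex derivative at each point of the strip. Fix $s_0$ with $a_1<\Re(s_0)<a_2$, choose $a,b$ with $a_1<a<\Re(s_0)<b<a_2$, and pick a small disk $D$ around $s_0$ contained in $\{a<\Re(s)<b\}$. For $s\in D$ define
\[
  \psi_s(t) = \frac{t^{s-1}-t^{s_0-1}}{s-s_0} \quad (s\neq s_0),\qquad \psi_{s_0}(t)=t^{s_0-1}\log t.
\]
By Lemma~\ref{lem:t_pow_s_test_func} the functions $t^{s-1}$ lie in $\M(a_1,a_2)$, and the same is true for $\psi_{s_0}$ since $|\log t|$ is absorbed by any strictly larger/smaller power at $0$ and $\infty$. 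Thus each $\psi_s$ is a legitimate test function.

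I would then establish the key step: $\psi_s\to\psi_{s_0}$ in $\M_{a,b}$ as $s\to s_0$. The identity
\[
  \psi_s(t)=t^{s_0-1}\int_0^1 t^{\tau(s-s_0)}\log t\, d\tau
\]
gives a representation valid for all $s\in D$ (including $s=s_0$) and differentiable under the integral sign. A direct computation shows that for each $k$,
\[
  t^{k+1}\frac{d^k}{dt^k}\psi_s(t) = \int_0^1 Q_k(s_0,\tau(s-s_0))\,t^{s_0+\tau(s-s_0)}\,\log t\,d\tau
\]
for an explicit polynomial $Q_k$. Combining this with the inequality $|t^{s-s_0}-1|\le|s-s_0||\log t|\max(1,t^{\Re(s-s_0)})$ and the fact that for any $\varepsilon>0$ the function $t\mapsto t^{\pm\varepsilon}|\log t|$ is bounded on $(0,1]$ and $[1,\infty)$ respectively, I would bound $\zeta_{a_1,a_2}(t)\,t^{k+1}\frac{d^k}{dt^k}(\psi_s-\psi_{s_0})(t)$ uniformly in $t$ by a constant times $|s-s_0|$, using the strict inequalities $a<\Re(s_0)<b$ to absorb all logarithmic factors into slightly larger powers of $t$. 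This yields $\norm{\psi_s-\psi_{s_0}}_{a,b,k}\to 0$ for every $k$, i.e.\ convergence in $\M_{a,b}$ and therefore in $\M(a_1,a_2)$.

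Once this convergence is in hand, the continuity of $f\in\M'(a_1,a_2)$ gives
\[
  \lim_{s\to s_0}\frac{\M f(s)-\M f(s_0)}{s-s_0}=\lim_{s\to s_0}\langle f,\psi_s\rangle=\langle f,t^{s_0-1}\log t\rangle,
\]
so $\M f$ is complex differentiable at $s_0$ with derivative $\langle f,t^{s_0-1}\log t\rangle$. Since $s_0$ was an arbitrary point of the strip, $\M f$ is holomorphic on $a_1<\Re(s)<a_2$.

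The main obstacle is the seminorm estimate on $\psi_s-\psi_{s_0}$: one must handle the $\log t$ factor, which is not itself a power, while simultaneously varying $s$ in a neighbourhood of $s_0$. The crucial flexibility is the strict inclusion $a_1<a<\Re(s_0)<b<a_2$, which provides the extra margin needed to dominate $|\log t|$ by $t^{-\delta}$ near $0$ and by $t^\delta$ near $\infty$ for small $\delta>0$. Everything else is a bookkeeping exercise with the explicit derivatives of $t^{s-1}$.
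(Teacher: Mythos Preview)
The paper does not supply its own proof of this lemma: immediately before stating it the authors write ``For proofs and details, see \cite{zemanian69_gener_integ_trans},'' and no argument is given. So there is nothing in the paper to compare your approach to directly.

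Your strategy---showing that the difference quotients $\psi_s$ converge to $t^{s_0-1}\log t$ in the test-function topology and then invoking continuity of $f$---is the standard way to prove holomorphy of a distributional pairing in a complex parameter, and it is sound here. Two small corrections to your sketch: the seminorms you must control are $\norm{\cdot}_{a,b,k}$ (with the weight $\zeta_{a,b}$), not $\norm{\cdot}_{a_1,a_2,k}$, since convergence in $\M(a_1,a_2)$ is defined via some fixed $\M_{a,b}$; and your displayed formula for $t^{k+1}\tfrac{d^k}{dt^k}\psi_s(t)$ is not quite right, because differentiating $t^{w-1}\log t$ by Leibniz also produces terms \emph{without} the factor $\log t$ (coming from derivatives of $\log t$). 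The correct expression has the form $t^{w}\bigl(P_k(w)\log t + R_k(w)\bigr)$ with $w=s_0+\tau(s-s_0)$ and $P_k,R_k$ polynomials. This does not affect the argument: both pieces are dominated exactly as you describe, using the margin $a<\Re(w)<b$ to absorb the logarithm. With those adjustments the outline goes through.
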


\begin{definition}\label{def:strip-of-holom}
  When we say $\M f$ has strip of holomorphicity $S$ (or $S_f$) we
  mean that
  \begin{equation}\label{eq:11}
    S = \lbrace s\in\C \mid a_1 < \Re(s) < a_2 \rbrace
  \end{equation}
  for some $a_1<a_2$ and $\M f$ is holomorphic on $S$. If $f \in
  \M'(a_1,a_2)$ with $S$ as above, we write $f\in\M'_S $ or $f\in
  \M'_{S_f}$. Also, given $a_1,a_2\in\R \cup \{-\infty, +\infty\}$, we
  denote
  \begin{equation}\label{eq:12}
    S(a_1,a_2) = \lbrace s\in\C \mid a_1<\Re(s)<a_2 \rbrace.
  \end{equation}
\end {definition}

\bigskip
The Mellin transform for distributions has several properties.
\begin{theorem}\label{thm:props-of-mellin-trans}
  In the following we assume that $f\in\M'_{S_f}$ and
  $g\in\M'_{S_g}$. It holds that:
  \begin{enumerate}
  \item\label{thm:props-of-mellin-trans:item0} If $n\in\N$ then
    $(-t\,d/dt)^n f \in\M'_{S_f}$ and $\M[(-t\,d/dt)^nf](s) = s^n
    \M[f](s)$.
  \item\label{thm:props-of-mellin-trans:item1} If $S_f \cap S_g \neq
    \emptyset$ and $\M f = \M g$ on $S_f\cap S_g$ then $f=g$ as
    distributions in $\M'_{S_f\cap S_g}$ and a fortiori in
    $\mathscr{D}'(\R_+)$.
  \item\label{thm:props-of-mellin-trans:item2} A function $F:S_f\to\C$
    is the Mellin transform of some $f\in\M'_{S_f}$ if and only if
    \begin{enumerate}[a)]
    \item $F$ is holomorphic in $S_f$, and
    \item for any closed substrip of $S_f$ of the form $\alpha_1 \leq
      \Re(s) \leq \alpha_2$ there is a polynomial $P$ such that
      $\abs{F(s)} \leq P (|s|) $ on that strip.
    \end{enumerate}
  \item\label{thm:props-of-mellin-trans:item3} Let $S_f \cap S_g =
    \{s\in \C \mid a_1 < \Re(s) < a_2\}$. Then
    \begin{equation}\label{eq:13}
      \M[f\vee g](s) = \M f(s)\, \M g(s), \qquad a_1 <\Re(s)< a_2
    \end{equation}
    where
    \begin{equation}\label{eq:14}
      (f\vee g)(\tau) = \int_0^\infty f(t)
      g\left(\frac{\tau}{t}\right) \frac{dt}{t}, \qquad\tau> 0
    \end{equation}
    if $f$ and $g$ are integrable functions and otherwise
    \begin{equation}\label{eq:15}
      \langle f\vee g, \theta \rangle = \langle f, \psi\rangle, \qquad
      \psi (t) = \langle g, \theta_t \rangle
    \end{equation}
    for $\theta \in \M(a_1,a_2)$, $t>0$ and $\theta_t(\tau) =
    \theta(t\tau)$.
  \end{enumerate}
\end{theorem}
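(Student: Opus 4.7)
The plan is to dispose of parts (i) and (iv) by straightforward duality computations against the test function $t^{s-1}$, and to spend most of the effort on parts (ii) and (iii), which carry the substantive Mellin-analytic content. For (i), the operator $-t\,d/dt$ on $\M'(a_1,a_2)$ is defined by duality: integrating $\int_0^\infty(-tf'(t))\phi(t)\,dt$ by parts formally produces $\int_0^\infty f(t)\bigl(\phi(t)+t\phi'(t)\bigr)\,dt$, so I set $\langle(-t\,d/dt)f,\phi\rangle:=\langle f,(1+t\,d/dt)\phi\rangle$. A direct check from Definition~\ref{def:test-functions} shows $(1+t\,d/dt)$ maps $\M(a_1,a_2)$ continuously into itself, so this defines an element of $\M'_{S_f}$. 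Evaluating on $\phi(t)=t^{s-1}$ gives $(1+t\,d/dt)t^{s-1}=s\,t^{s-1}$, whence $\M[(-t\,d/dt)f](s)=s\,\M f(s)$, and iteration yields the $n$-th order formula.

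For (ii), set $u:=f-g\in\M'_S$ with $S=S_f\cap S_g$, and assume $\M u\equiv0$ on $S$. By density of $\mathscr{D}(\R_+)$ in $\M(S)$ from Lemma~\ref{lem:smooth-function}, it suffices to show $\langle u,\phi\rangle=0$ for $\phi\in\mathscr{D}(\R_+)$. For such $\phi$ the Mellin transform $\M\phi$ is entire and of rapid decay on vertical lines (integrate by parts against $\phi^{(k)}$). Mellin inversion reads $\phi(t)=\tfrac{1}{2\pi i}\int_{c-i\infty}^{c+i\infty}\M\phi(1-s)\,t^{s-1}\,ds$ for any real $c$; choose $c\in(a_1,a_2)$ so the contour lies in $S$. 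Approximating by Riemann sums and using the rapid decay of $\M\phi(1-\cdot)$ to dominate the seminorms $\|\cdot\|_{a_1,a_2,k}$ shows the sums converge to $\phi$ in $\M_{a_1,a_2}$, and exchanging the pairing with the integral yields $\langle u,\phi\rangle=\tfrac{1}{2\pi i}\int\M\phi(1-s)\M u(s)\,ds=0$.

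For (iii), one direction is Lemma~\ref{lem:mell-transf-holom} combined with continuity of $f$ on some $\M_{a_1,a_2}$: one has $|\langle f,\phi\rangle|\le C\sum_{k\le N}\|\phi\|_{a_1,a_2,k}$, and the seminorms of $\phi(t)=t^{s-1}$ are bounded by $|(s-1)\cdots(s-k)|$, giving polynomial growth of $\M f$ on any closed substrip. For the converse, given $F$ holomorphic and polynomially bounded of degree $N$ on $S=S(a_1,a_2)$, factor $F(s)=P(s)G(s)$ with $P(s):=\prod_{j=0}^{N+1}(s+j)$ so that $G$ decays like $|s|^{-2}$ on vertical lines. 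Then $g(t):=\tfrac{1}{2\pi i}\int_{c-i\infty}^{c+i\infty}G(s)\,t^{-s}\,ds$ is a classical continuous function with $\M g=G$, and $f:=P(-t\,d/dt)g$, defined via part (i) iterated, belongs to $\M'(a_1,a_2)$ and satisfies $\M f(s)=P(s)\M g(s)=F(s)$ on $S$; independence of $c$ is Cauchy's theorem.

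For (iv), the first task is to show $\theta\mapsto\psi$, $\psi(t):=\langle g,\theta_t\rangle$, maps $\M(a_1,a_2)$ continuously into itself under the hypothesis $S_g\supset S(a_1,a_2)$: smoothness of $\psi$ in $t$ follows by differentiating under $\langle g,\cdot\rangle$ (justified by convergence of difference quotients in each $\M_{a,b}$ with $a_1<a<b<a_2$), and the seminorm estimates come from continuity of $g$ on $\M_{a,b}$ applied to $\theta_t$. Then $\langle f,\psi\rangle$ defines $f\vee g\in\M'(a_1,a_2)$; plugging $\theta(\tau)=\tau^{s-1}$ gives $\theta_t(\tau)=t^{s-1}\tau^{s-1}$, hence $\psi(t)=t^{s-1}\M g(s)$ and $\M[f\vee g](s)=\M f(s)\,\M g(s)$. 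For integrable $f,g$ the two definitions agree by Fubini. The main obstacle is part (iii): choosing $P$ to match the polynomial degree of $F$ and tracking the Mellin-growth bookkeeping carefully enough that the contour integrals converge where needed, both for the inversion construction above and for the Riemann-sum interchange in part (ii), which relies on the rapid vertical-line decay of Mellin transforms of smooth compactly supported functions.
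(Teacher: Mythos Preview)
The paper does not give a proof of this theorem: immediately before the statement it says ``For proofs and details, see \cite{zemanian69_gener_integ_trans}'', and no argument is supplied. Your sketch is essentially the standard one (and is in fact the approach behind Corollary~\ref{cor:invert-poly-growth}, which the paper states as a consequence): define $(-t\,d/dt)$ by duality, invert by dividing out a polynomial to gain $|s|^{-2}$ decay, and prove uniqueness by writing test functions as superpositions of $t^{s-1}$ via Mellin inversion.

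Two small technical points are worth tightening. In part~(iii) your polynomial $P(s)=\prod_{j=0}^{N+1}(s+j)$ has its zeros at $s=0,-1,\dots,-(N{+}1)$, which may lie inside $S_f$ when $a_1<0$; you should instead place the roots outside the closed substrip under consideration (exactly the hypothesis in Corollary~\ref{cor:invert-poly-growth}). In part~(ii), convergence of the Riemann sums should be checked in some $\M_{a,b}$ with $a_1<a<c<b<a_2$ rather than in $\M_{a_1,a_2}$, since by Definition~\ref{def:test-functions} continuity of $u\in\M'(a_1,a_2)$ is tested against sequences converging in a fixed $\M_{a,b}$; choosing $a<c<b$ ensures $t^{s-1}\in\M_{a,b}$ along the contour $\Re s=c$. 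Neither issue is a genuine gap.
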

Recall from Section~\ref{sec:hilb-mell-transf} that $f\vee g$ in
(\ref{eq:14}) is the convolution of the multiplicative group
$(\R_+,\cdot)$ and $dt/t$ is its Haar measure.

\medskip
The following gives an inversion formula for the Mellin transform.
\begin{theorem}\label{thm:inverse-mellin}
  If $F : S(a_1, a_2)\to\C$ is holomorphic and satisfies $\abs{F(s)}
  \leq K\abs{s}^{-2}$ for some finite constant $K$, and we set
  \begin{equation}\label{eq:16}
    f(t) = \frac{1}{2\pi
      i}\int_{\sigma-i\infty}^{\sigma+i\infty}F(s)t^{-s}\,ds,
  \end{equation}
  for a fixed $\sigma\in(a_1,a_2)$, then $f:\R_+\to\C$ is continuous,
  does not depend on the choice of $\sigma$ and is in
  $\M'(a_1,a_2)$. Furthermore $\M f=F$ on $S(a_1,a_2)$.
\end{theorem}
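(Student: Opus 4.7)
The plan is to establish the four assertions in turn: continuity of $f$, independence from the choice of $\sigma$, membership in $\M'(a_1,a_2)$, and finally the identity $\M f=F$ on $S(a_1,a_2)$. The uniform tool throughout is the bound $\abs{F(\sigma+i\tau)}\leq K(\sigma^2+\tau^2)^{-1}\leq K/\tau^2$, which makes the defining integral~(\ref{eq:16}) absolutely convergent for every $t>0$ and every $\sigma\in(a_1,a_2)$.

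For continuity, I would dominate the integrand on any compact interval $[t_0,t_1]\subset\R_+$ by an integrable function of $\tau$ (the factor $t^{-\sigma}$ being bounded there) and invoke dominated convergence. For independence of $\sigma$, I would fix $\sigma,\sigma'\in(a_1,a_2)$ and apply Cauchy's theorem to $z\mapsto F(z)t^{-z}$ on the rectangle with vertical sides $\Re z=\sigma,\sigma'$ and height $2R$. The horizontal legs are bounded by $KR^{-2}\max(t^{-\sigma},t^{-\sigma'})\abs{\sigma'-\sigma}$ and vanish as $R\to\infty$, forcing the two vertical integrals to coincide.

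For membership in $\M'(a_1,a_2)$, given $\phi\in\M_{a,b}$ with $a_1<a<b<a_2$, I would split $\int_0^\infty f(t)\phi(t)\,dt$ at $t=1$. On $(0,1]$ I would represent $f$ via~(\ref{eq:16}) with $\sigma\in(a_1,a)$, yielding $\abs{f(t)}\leq C_\sigma t^{-\sigma}$; paired with $\abs{\phi(t)}\leq t^{a-1}\norm{\phi}_{a,b,0}$ from Definition~\ref{def:test-functions} this produces an integrable majorant because $a-\sigma>0$. On $[1,\infty)$ I would instead use $\sigma'\in(b,a_2)$, so that the exponent $b-\sigma'<0$ provides integrability at infinity. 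Together these show $\abs{\langle f,\phi\rangle}\leq C\norm{\phi}_{a,b,0}$ with $C$ depending only on $a,b$, giving both well-definedness and the sequential continuity required by Definition~\ref{def:mellin-transformables}.

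The main obstacle is the identity $\M f(s_0)=F(s_0)$ for $s_0\in S(a_1,a_2)$, because $f(t)t^{s_0-1}$ need not be globally absolutely integrable for a single $\sigma$, which blocks a direct Fubini swap. I would again split $\int_0^\infty f(t)t^{s_0-1}\,dt$ at $t=1$: on $(0,1]$ represent $f$ via~(\ref{eq:16}) with $\sigma\in(a_1,\Re s_0)$ so Fubini applies and yields $\frac{1}{2\pi i}\int_{\sigma-i\infty}^{\sigma+i\infty} F(z)/(s_0-z)\,dz$, while on $[1,\infty)$ I would take $\sigma'\in(\Re s_0,a_2)$ and obtain $-\frac{1}{2\pi i}\int_{\sigma'-i\infty}^{\sigma'+i\infty} F(z)/(s_0-z)\,dz$. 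The difference of these two vertical integrals is the limit of a closed clockwise contour around the rectangle $\sigma<\Re z<\sigma'$, $\abs{\Im z}<R$, whose horizontal legs vanish exactly as in the $\sigma$-independence step. The only singularity of $F(z)/(s_0-z)$ inside is the simple pole at $z=s_0$ with residue $-F(s_0)$, so the residue theorem returns $2\pi i\,F(s_0)$, and the prefactor $1/(2\pi i)$ recovers $F(s_0)$ exactly.
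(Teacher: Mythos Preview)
The paper does not prove this theorem; it is quoted as background with the remark ``For proofs and details, see \cite{zemanian69_gener_integ_trans}.'' So there is no in-paper argument to compare your proposal against.

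Your proof is correct. The split-at-$t=1$ device combined with two different choices of $\sigma$ (one below $\Re s_0$, one above) is exactly the right way to handle the asymmetric behaviour at $0$ and $\infty$, both for the membership $f\in\M'(a_1,a_2)$ and for the computation of $\M f(s_0)$. In the last step your orientation and residue bookkeeping check out: the clockwise rectangle picks up $-2\pi i\cdot\operatorname{Res}_{z=s_0}\big(F(z)/(s_0-z)\big)=-2\pi i\cdot(-F(s_0))$, and the prefactor $1/(2\pi i)$ collapses this to $F(s_0)$. The one place I would add a line is the absolute-convergence check that licenses Fubini on each half-interval, namely
\[
\int_0^1\!\int_{\Re z=\sigma}\abs{F(z)}\,t^{\Re s_0-\sigma-1}\,\abs{dz}\,dt<\infty
\quad\text{since }\Re s_0-\sigma>0,
\]
and the analogous estimate on $[1,\infty)$ with $\sigma'$; you allude to this but it is the crux of why the two-$\sigma$ trick is needed at all.
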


The following corollary is Theorem~4.4.1 in
\cite{zemanian69_gener_integ_trans}. In that reference, it is used to
prove the result that corresponds to
Item~\ref{thm:props-of-mellin-trans:item2} of
Theorem~\ref{thm:props-of-mellin-trans} of our
article\footnote{Strictly speaking, this applies to the corresponding
results for the Laplace transform. The results from the Mellin
transform are only stated.}, and it gives another inversion formula
for the cases where the theorem above cannot be applied. Namely, if
$F$ has a singularity on the border of $S(a_1,a_2)$.
\begin{corollary}\label{cor:invert-poly-growth}
  Let $F : S(a_1, a_2)\to\C$ be holomorphic and $Q:\C\to\C$ be a
  polynomial that has no zeroes in $S(a_1,a_2)$ such that
  \begin{equation}\label{eq:17}
    \abs{\frac{F(s)}{Q(s)}} \leq \frac{K}{\abs{s}^2}, \qquad b_1 <
    \Re(s) < b_2
  \end{equation}
  for some $a_1 < b_1 < b_2 < a_2$ and a finite constant $K$. Set
  \begin{equation}\label{eq:18}
    g(t) = \frac{1}{2\pi i}\int_{\sigma-i\infty}^{\sigma+i\infty}
    \frac{F(s)}{Q(s)} t^{-s}\, ds,
  \end{equation}
  for some $b_1<\sigma<b_2$. Then $g:\R_+\to\C$ is continuous, belongs
  to $\M'(b_1,b_2)$ as does $f(t)=Q(-t\,d/dt)g(t)$ too. Furthermore
  $\M f = F$ on $S(b_1,b_2)$.
\end{corollary}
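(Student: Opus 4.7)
The plan is to reduce this corollary to Theorem~\ref{thm:inverse-mellin} applied to the quotient $F/Q$ rather than $F$ itself, and then to recover $F$ by means of the differential operator property from Theorem~\ref{thm:props-of-mellin-trans}.

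First I would verify that the function $F/Q$ meets the hypotheses of Theorem~\ref{thm:inverse-mellin} on the strip $S(b_1,b_2)$. Since $Q$ has no zeros in $S(a_1,a_2) \supset S(b_1,b_2)$ and $F$ is holomorphic there, $F/Q$ is holomorphic on $S(b_1,b_2)$; and the decay estimate $\abs{F(s)/Q(s)} \leq K/\abs{s}^2$ on $b_1<\Re s<b_2$ is exactly the hypothesis of the corollary. Applying Theorem~\ref{thm:inverse-mellin} to $F/Q$ (with $a_1,a_2$ there replaced by $b_1,b_2$) yields immediately that the function
\[
g(t) = \frac{1}{2\pi i} \int_{\sigma - i\infty}^{\sigma + i\infty} \frac{F(s)}{Q(s)} t^{-s}\, ds
\]
is continuous on $\R_+$, independent of the choice of $\sigma\in(b_1,b_2)$, lies in $\M'(b_1,b_2)$, and satisfies $\M g(s) = F(s)/Q(s)$ on $S(b_1,b_2)$.

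Next I would write $Q(s) = \sum_{n=0}^{N} c_n s^n$ and apply Item~\ref{thm:props-of-mellin-trans:item0} of Theorem~\ref{thm:props-of-mellin-trans} term by term. That item states that each $(-t\,d/dt)^n g$ still lies in $\M'(b_1,b_2)$ with Mellin transform $s^n \M g(s)$. By linearity,
\[
f(t) = Q(-t\,d/dt) g(t) = \sum_{n=0}^{N} c_n (-t\,d/dt)^n g(t)
\]
belongs to $\M'(b_1,b_2)$, and its Mellin transform equals
\[
\M f(s) = \sum_{n=0}^{N} c_n s^n \,\M g(s) = Q(s)\, \frac{F(s)}{Q(s)} = F(s)
\]
on $S(b_1,b_2)$, as required.

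I do not foresee a real obstacle here: the corollary is essentially a packaging of Theorem~\ref{thm:inverse-mellin} combined with the differentiation rule for the distributional Mellin transform, and the role of $Q$ is precisely to absorb the missing polynomial decay so that the inversion theorem becomes applicable. The only thing to be slightly careful about is that the decay hypothesis is only assumed on the substrip $b_1<\Re s<b_2$, which is exactly why the conclusion is formulated on $\M'(b_1,b_2)$ rather than on the larger strip $S(a_1,a_2)$.
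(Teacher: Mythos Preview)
Your proposal is correct and is precisely the argument the paper intends: the corollary is stated without proof in the paper, but its placement immediately after Theorem~\ref{thm:inverse-mellin} and its use of the operator $Q(-t\,d/dt)$ make clear that the intended derivation is exactly the two-step reduction you wrote---apply Theorem~\ref{thm:inverse-mellin} to $F/Q$ on $S(b_1,b_2)$, then recover $F$ via Item~\ref{thm:props-of-mellin-trans:item0} of Theorem~\ref{thm:props-of-mellin-trans}.
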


\section{The Hilbert transform}\label{sec:hilbert-transform}
We will need to know the Mellin transform of the distribution
\begin{equation}\label{eq:19}
  \left\langle H, \phi \right\rangle = \frac{1}{\pi} \lim
  _{\varepsilon\to 0+}\left( \int_0^{1 - \varepsilon} + \int
  _{1+\varepsilon} ^\infty\right) \frac{\phi(t)}{1-t}dt,
\end{equation}
namely $H=\pi^{-1}/(1-t)$ in the principal value sense. It is almost
the kernel of the Hilbert transform of a function vanishing on $\R_-$
\begin{equation}\label{eq:20}
  \Ht f(x) = \frac{1}{\pi}\pv \int_0^\infty \frac{f(y)}{x-y}\,dt.
\end{equation}
In fact, formally
\begin{equation}\label{eq:21}
  \Ht f(x) = -\frac{1}{\pi}\pv\int_0^\infty \frac{1}{1-t}
  f\left(\frac{x}{t}\right) \,\frac{dt}{t} = -(H \vee f)(x),
\end{equation}
which can be deduced from (\ref{eq:20}) by change integration
variables $y=x/t$, $dy = - x \,dt / t^2$.

\begin{lemma}\label{lem:pv-Mellin-transformable}
  The distribution $1/(1-t)$ in the principal value sense belongs to
  $\M'(0,1)$. Furthermore it can be written as
  \begin{equation}\label{eq:22}
    \left\langle \frac1{1-t},\phi \right\rangle = \left(\int_0^{1/2} +
    \int_{3/2}^\infty\right) \frac{\phi(t)}{1-t} dt - \int_{1/2}^{3/2}
    \frac{\phi(t)-\phi(1)}{t-1} dt
  \end{equation}
  where $1/(1-t)$ is interpreted as a pointwise function on the
  right-hand side. Lastly, there is a finite $C$ such that
  $\abs{\langle 1/(1-t),\phi \rangle} \leq C (\norm{\phi}_{0,1,0} +
  \norm{\phi}_{0,1,1})$.
\end{lemma}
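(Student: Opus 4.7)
The plan is to derive the explicit representation (\ref{eq:22}) first, and then read off both the membership $\pv 1/(1-t) \in \M'(0,1)$ and the norm bound by estimating its three pieces. The starting point is the defining principal value
\[
\left\langle \frac{1}{1-t}, \phi\right\rangle = \lim_{\varepsilon \to 0^+}\left(\int_0^{1-\varepsilon} + \int_{1+\varepsilon}^\infty\right)\frac{\phi(t)}{1-t}\,dt,
\]
and the crux is the cancellation near $t = 1$.

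To obtain (\ref{eq:22}), I would split the integration domain into four pieces: $(0, 1/2)$, $(1/2, 1-\varepsilon)$, $(1+\varepsilon, 3/2)$, and $(3/2, \infty)$. The outer two pieces do not depend on $\varepsilon$ (for $\varepsilon < 1/2$) and are absolutely convergent: since $\phi \in \M(0,1)$ there exist $0 < a < b < 1$ with $\phi \in \M_{a,b}$, yielding $\abs{\phi(t)} \leq \norm{\phi}_{a,b,0}\, t^{a-1}$ on $(0, 1]$ and $\abs{\phi(t)} \leq \norm{\phi}_{a,b,0}\, t^{b-1}$ on $[1, \infty)$, which combine with the boundedness of $1/(1-t)$ on the outer ranges to give integrability. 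For the two inner pieces I would observe that
\[
\int_{1/2}^{1-\varepsilon}\frac{dt}{1-t} + \int_{1+\varepsilon}^{3/2}\frac{dt}{1-t} = 0
\]
by direct computation; the logarithmic singularities in $\varepsilon$ cancel by symmetry. This permits replacing $\phi(t)$ by $\phi(t) - \phi(1)$ inside both integrals without changing their sum. Since $\phi$ is smooth, $(\phi(t) - \phi(1))/(t-1)$ extends continuously to $t = 1$ with value $\phi'(1)$ and is therefore bounded on $[1/2, 3/2]$; dominated convergence then removes the cutoffs, yielding $-\int_{1/2}^{3/2}(\phi(t) - \phi(1))/(t-1)\,dt$, which combined with the outer pieces is exactly (\ref{eq:22}).

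With (\ref{eq:22}) in hand, the estimate is routine. Working in a fixed $\M_{a,b}$ with $0 < a < b < 1$: on $(0, 1/2)$ use $\abs{1-t}^{-1} \leq 2$ together with $\abs{\phi(t)} \leq \norm{\phi}_{a,b,0}\, t^{a-1}$ and integrate $t^{a-1}$; on $(3/2, \infty)$ use $\abs{1-t}^{-1} \leq 3/t$ and $\abs{\phi(t)} \leq \norm{\phi}_{a,b,0}\, t^{b-1}$, giving integrability of $t^{b-2}$ because $b < 1$; on $(1/2, 3/2)$ the mean value theorem gives $\abs{(\phi(t) - \phi(1))/(t-1)} \leq \sup_{\xi \in [1/2, 3/2]}\abs{\phi'(\xi)}$, and since $\zeta_{a,b}(\xi)\,\xi^2 \geq 1/4$ on this interval for $0 \leq a, b \leq 1$, the supremum is bounded by $4\norm{\phi}_{a,b,1}$. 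Adding the three estimates gives $\abs{\langle 1/(1-t), \phi\rangle} \leq C_{a,b}(\norm{\phi}_{a,b,0} + \norm{\phi}_{a,b,1})$, which is precisely continuity of the functional on each $\M_{a,b}$ and therefore establishes $\pv 1/(1-t) \in \M'(0,1)$ per Definition~\ref{def:mellin-transformables}.

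The main obstacle is the symmetric cancellation at $t = 1$ that converts the principal value into the regularized form (\ref{eq:22}); everything after it reduces to elementary estimation with the test-function norms. The bound quoted in the lemma should be read in the sense above, with the constant $C$ depending on the substrip $(a,b) \subset (0,1)$ to which $\phi$ belongs, consistent with the inductive topology on $\M(0,1)$ fixed in Definition~\ref{def:test-functions}.
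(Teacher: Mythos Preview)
Your proof is correct and follows essentially the same route as the paper: split off the outer intervals $(0,1/2)$ and $(3/2,\infty)$, use the symmetric cancellation $\int_{1/2}^{1-\varepsilon}\frac{dt}{1-t}+\int_{1+\varepsilon}^{3/2}\frac{dt}{1-t}=0$ (the paper does this via the substitution $s=2-t$ with a cutoff $h$, but it is the same identity), replace $\phi(t)$ by $\phi(t)-\phi(1)$ on the inner piece, and then estimate the three terms with exactly the same inequalities $|1-t|^{-1}\le 2$ on $(0,1/2)$, $|1-t|^{-1}\le 3/t$ on $(3/2,\infty)$, and the mean-value bound on $[1/2,3/2]$. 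Your closing remark that the constant must depend on the substrip $(a,b)$ is in fact more careful than the paper's own passage from $\norm{\phi}_{a,b,k}$ to $\norm{\phi}_{0,1,k}$, which is stated without tracking how $C_{a,b}$ behaves; either reading suffices for the continuity needed to place $\pv\,1/(1-t)$ in $\M'(0,1)$.
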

\begin{proof}
  Let us denote $u = 1/(1-t)$ and recall that the distribution
  pairings are done with the principal value. We will first prove that
  $\langle u, \phi \rangle\in\C$ for $\phi\in\M(0,1)$. The latter
  means there are $0<a<b<1$ such that $\phi\in\M_{a,b}$. In particular
  (\ref{eq:1}) implies that $\norm\phi_{a,b,0}$ and
  $\norm\phi_{a,b,1}$ are finite. Let $h(t)=1$ for $1/2<t<3/2$ and
  $h(t)=0$ otherwise. Then
  \begin{equation}\label{eq:23}
    \langle u, \phi \rangle = \lim_{\epsilon \rightarrow 0} \left(
    \int _0^{1 -\epsilon} + \int_{1 +\epsilon}^\infty\right) \left(
    \frac{\phi(t) - \phi (1)h(t)}{ 1 -t} + \frac{\phi(1) h(t)}{1
      -t}\right) dt
  \end{equation}
  with $s=2-t$ we see that
  \begin{align*}
    \int_{1/2}^{1-\epsilon} \frac{\phi(1) h(t)}{1-t} dt &=
    \phi(1)\int_{1/2}^{1-\epsilon} \frac{dt}{1-t} = \phi(1)
    \int_{3/2}^{1+\epsilon} \frac{-ds}{-1+s} =
    \phi(1)\int_{3/2}^{1+\epsilon} \frac{ds}{1-s}\\ &=
    -\int_{1+\epsilon}^{3/2} \frac{\phi (1) h(s)}{1-s}ds
  \end{align*}
  and so the last integral in (\ref{eq:23}) vanishes. For the first
  integral recall that $\phi$ is smooth. Hence the secant $(\phi(t) -
  \phi(1)) / (t-1)$ is a continuous function of $t$. We see that
  \begin{align}\label{eq:24}
    \left( \int_0^{1-\epsilon} + \int_{1+\epsilon}^\infty \right)
    \frac{\phi(t) - \phi(1) h(t)}{1-t} dt & = \left( \int_0^{1/2} +
    \int_{3/2}^\infty \right) \frac{\phi(t)}{1-t} dt \\ \nonumber &+
    \left( \int_{1/2}^{1 -\epsilon} + \int_{1+\epsilon}^{3/2}
    \right)\frac{\phi(t) - \phi(1)}{1-t} dt .
  \end{align}
  This proves (\ref{eq:22}), as $\epsilon$ can be let equal to zero as
  the secant is continuous. The first integrand is continuous on
  $(0,1/2)\cup(3/2,\infty)$. It is also integrable since
  \begin{align}
    \int_0^{1/2} \abs{\frac{\phi(t)}{1-t}}dt &\leq \int_0^{1/2}
    t^{a-1} t^{1-a} \abs{\phi(t)} \cdot 2 dt \leq 2\intlim{0}{1/2}
    \frac{t^a}{a} \sup_{0<t<1/2} t^{1-a}\abs{\phi(t)} \notag\\ &\leq
    \frac{2^{1-a}}{a} \norm{\phi}_{a,b,0} < \infty. \label{eq:25}
  \end{align}
  Similarly
  \begin{align}
    \int_{3/2}^\infty \abs{\frac{\phi(t)}{1-t}}dt &\leq
    \int_{3/2}^\infty \frac{t^{b-1}}{t-1}t^{1-b} \abs{\phi(t)} dt \leq
    \norm{\phi}_{a,b,0} \int_{3/2}^\infty \frac{t^{b-1}}{t-1} dt
    \notag\\ &\leq \norm{\phi}_{a,b,0} 3\int_{3/2}^\infty
    \frac{t^{b-1}}{t} dt = \norm{\phi}_{a,b,0} 3 \intlim{3/2}{\infty}
    \frac{t^{b-1}}{b-1} \notag\\ &=\frac{3(3/2)^{b-1}}{1-b}
    \norm{\phi}_{a,b,0} <\infty.\label{eq:26}
  \end{align}
  because $1/(t-1) \leq 3/t$ for $t\geq3/2$.

  For the second integrand in (\ref{eq:24}) note that
  \begin{equation}\label{eq:27}
    \abs{\frac{\phi(t)-\phi(1)}{t-1}} = \abs{\phi'(\xi)} \leq
    \sup_{1/2<t<3/2}\abs{\phi'(t)}\leq C\norm{\phi}_{a,b,1} < \infty
  \end{equation}
  for some finite constant $C$. Hence we can take the limit and have
  \begin{equation}\label{eq:28}
    \lim_{\epsilon\to0} \left( \int_{1/2}^{1-\epsilon} +
    \int_{1+\epsilon}^{3/2} \right) \frac{\phi(t)-\phi(1)}{1-t} dt =
    -\int_{1/2}^{3/2} \frac{\phi(t)-\phi(1)}{t-1} dt
  \end{equation}
  which is bounded by
  \begin{equation}\label{eq:29}
    \int_{1/2}^{3/2} C\norm{\phi}_{a,b,1}dt = C \norm{\phi}_{a,b,1} <
    \infty.
  \end{equation}
  Hence $\langle u,\phi \rangle \in \C $ for any $\phi\in \M_{a,b}$
  with $0<a<b<1$. Similarly, by our calculation so far we have
  $\abs{\langle u,\phi \rangle} \leq C \big( \norm{\phi}_{a,b,0} +
  \norm{\phi}_{a,b,1}\big)$ for a finite constant $C$ whenever $\phi
  \in \M_{a,b}$. By (\ref{eq:1}) we can decrease $a$ and increase $b$
  to get
  \[
  \abs{\langle u,\phi \rangle} \leq C (\norm{\phi}_{0,1,0} +
  \norm{\phi}_{0,1,1})
  \]
  for any $\phi\in\M_{a,b}$. Because this holds for arbitrary
  $0<a<b<1$, by Definition~\ref{def:test-functions} the same estimate
  holds for all $\phi\in\M(0,1)$. So the estimate in our claim is
  proven.

  Now, let $(\phi_j)_{j=1}^\infty \to 0$ in $\M(0,1)$. This means that
  there is $0<a<b<1$ such that $(\phi_j)_{j=1}^\infty \subset
  \M_{a,b}$ and $\norm{\phi_j}_{a,b,k}\to0$ as $j\to\infty$ for each
  $k\in \N$. Thus
  \[
  \abs{\langle u,\phi_j \rangle} \leq C \left( \norm{\phi_j}_{a,b,0} +
  \norm{\phi_j}_{a,b,1}\right) \to 0
  \]
  and continuity is proven. The linearity property is trivial. Hence
  $u\in\M'(0,1)$.
\end{proof}

\begin{lemma}\label{lem:pv-Mellin-trans}
  We have $\M[1/(1-t)](s) = \pi\cot(\pi s)$ in the principal value
  sense for $0<\Re(s)<1$.
\end{lemma}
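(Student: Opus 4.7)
The plan is to unfold the distributional pairing into a classical principal-value integral and then evaluate that integral by a standard keyhole-contour computation.

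First, by Definition~\ref{def:mellin-of-distr} we have $\M[1/(1-t)](s) = \langle 1/(1-t),\,t^{s-1}\rangle$ for $0<\Re(s)<1$, and the test function $t^{s-1}$ really does belong to $\M(0,1)$ by Lemma~\ref{lem:t_pow_s_test_func}. Applying the explicit formula from Lemma~\ref{lem:pv-Mellin-transformable} with $\phi(t)=t^{s-1}$, and using $-(t^{s-1}-1)/(t-1) = (t^{s-1}-1)/(1-t)$, the pairing becomes
\[
\left(\int_0^{1/2}+\int_{3/2}^\infty\right)\frac{t^{s-1}}{1-t}\,dt + \int_{1/2}^{3/2}\frac{t^{s-1}-1}{1-t}\,dt.
\]
Splitting the classical principal-value integral at $t=1/2$ and $t=3/2$ and using $\pv\int_{1/2}^{3/2}(1-t)^{-1}\,dt=0$ (odd symmetry about $t=1$) shows that the same expression equals $\pv\int_0^\infty t^{s-1}/(1-t)\,dt$. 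So the distributional Mellin transform coincides with the classical principal-value integral throughout the strip.

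Both sides of the claimed identity are holomorphic on $0<\Re(s)<1$: the left-hand side by Lemma~\ref{lem:mell-transf-holom}, and $\pi\cot(\pi s)$ because its only poles are at integers. By the identity theorem it is enough to check equality for real $s\in(0,1)$. For such $s$ I would compute
\[
I(s):=\pv\int_0^\infty\frac{t^{s-1}}{1-t}\,dt
\]
with a keyhole contour around the positive real axis, choosing the branch of $z^{s-1}$ with $\arg z\in(0,2\pi)$ and indenting the simple pole at $z=1$ by small semicircles above and below the cut. The contour encloses no singularities, so the total integral vanishes. The arcs of radii $R\to\infty$ and $r\to 0^+$ contribute nothing thanks to $0<\Re(s)<1$; the top and bottom segments of the cut contribute $I(s)$ and $-e^{2\pi is}I(s)$ respectively (the factor coming from the branch jump $z^{s-1}\mapsto z^{s-1}e^{2\pi i(s-1)}$); and the two indentation half-circles contribute $i\pi$ and $i\pi e^{2\pi is}$ by the half-residue formula, using that the residue of $z^{s-1}/(1-z)$ at $z=1$ is $-1$ on the top branch and $-e^{2\pi is}$ on the bottom. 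Setting the sum to zero gives
\[
(1-e^{2\pi is})I(s)+i\pi(1+e^{2\pi is})=0,
\]
which, after multiplying numerator and denominator by $e^{-i\pi s}$, rearranges to $I(s)=\pi\cot(\pi s)$.

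The only delicate point I foresee is the careful accounting of orientations and branch values at the two indentations around $z=1$: the semicircles are traversed in opposite senses and the value of $z^{s-1}$ there differs by $e^{2\pi i(s-1)}$ between the two branches, so signs and phases must be tracked precisely. Everything else, including the vanishing of the two circular arcs and the final trigonometric simplification, is routine.
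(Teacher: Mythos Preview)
Your proof is correct and follows essentially the same approach as the paper: reduce the distributional Mellin transform to the classical principal-value integral $\pv\int_0^\infty t^{s-1}/(1-t)\,dt$ and then evaluate it by a keyhole contour. The paper simply cites Example~8.24.II in \cite{pap99_compl_analy_examp_exerc} for that computation, whereas you spell out the contour argument; your reduction to real $s$ via the identity theorem is unnecessary (the arc estimates depend only on $\Re(s)$, so the keyhole computation works directly for all $s$ with $0<\Re(s)<1$) but harmless.
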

\begin{proof}
  The distribution is in $\M(0,1)$. All we need to do is to calulate
  \begin{equation}\label{eq:30}
    \pv\int_0^\infty \frac{t^{s-1}}{1-t} dt.
  \end{equation}
  Refer to Example~8.24.II in \cite{pap99_compl_analy_examp_exerc},
  especially pages 219--220 for the calculations.
\end{proof}

\begin{definition}\label{def:hilbert-transform}
  For $f\in\M'(a,b)$ with $0\leq a < b \leq 1$ define the Hilbert
  transform by
  \begin{equation}\label{eq:31}
    \Ht f = - H\vee f
  \end{equation}
  where $H$ is defined in (\ref{eq:19}) and $\vee$ in (\ref{eq:15}).
\end{definition}

\begin{lemma}\label{lem:hilbert-well-def}
  The Hilbert transform is a well-defined element of $\M'(a,b)$ and if
  $f$ is smooth and compactly supported in $\R_+$ we have
  (\ref{eq:20}).
\end{lemma}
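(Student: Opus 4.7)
The plan is to derive both assertions from the convolution statement of Theorem~\ref{thm:props-of-mellin-trans}(\ref{thm:props-of-mellin-trans:item3}) together with the structural information about $H$ supplied by Lemma~\ref{lem:pv-Mellin-transformable}. For the first assertion, I observe that $H \in \M'(0,1)$ gives $S_H = S(0,1)$, while the hypothesis $0 \leq a < b \leq 1$ gives $S_f \cap S_H = S(a,b)$, a non-empty open strip. Theorem~\ref{thm:props-of-mellin-trans}(\ref{thm:props-of-mellin-trans:item3}) then produces $H \vee f \in \M'(a,b)$ directly, whence $\Ht f = -H \vee f \in \M'(a,b)$ as well.

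For the second assertion, I would unwind the distributional convolution formula (\ref{eq:15}): for $\theta \in \M(a,b)$,
\[
\langle H \vee f, \theta \rangle = \langle H, \psi \rangle, \qquad \psi(t) = \langle f, \theta_t \rangle, \quad \theta_t(\tau) = \theta(t\tau).
\]
Since $f \in \mathscr{D}(\R_+)$ is smooth and compactly supported, the pairing is simply $\psi(t) = \int_0^\infty f(\tau)\, \theta(t\tau)\, d\tau$, a smooth function of $t$. Plugging this into $\langle H,\psi\rangle = \pi^{-1}\,\pv\!\int_0^\infty \psi(t)/(1-t)\,dt$, applying Fubini, and making the change of variable $x = t\tau$ for each fixed $\tau$ converts $dt/(1-t)$ into $dx/(\tau-x)$. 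A second Fubini swap moves $\theta(x)$ outside, and the remaining inner integral $\pi^{-1}\,\pv\!\int_0^\infty f(\tau)/(\tau-x)\,d\tau$ is recognised as $-\Ht f(x)$ in the classical pointwise sense of (\ref{eq:20}). Collecting signs yields $\langle \Ht f, \theta\rangle = \int_0^\infty \Ht f(x)\,\theta(x)\,dx$, identifying the two definitions.

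The main obstacle is justifying Fubini across the principal-value singularity at $t = 1$. The cleanest route is to expand $\langle H, \psi\rangle$ using the representation (\ref{eq:22}) from Lemma~\ref{lem:pv-Mellin-transformable}, which splits the integration into an outer region $(0,1/2) \cup (3/2,\infty)$ where the kernel $1/(1-t)$ is bounded, and a central region $(1/2, 3/2)$ whose integrand is the secant $(\psi(t)-\psi(1))/(t-1)$. On the outer region, compact support of $f$ in $\tau$ together with the endpoint conditions $a\geq 0$, $b \leq 1$ (precisely what ensures integrability of $\theta(t\tau)/(1-t)$ in $t$ near $0$ and near $\infty$) make the double integral absolutely convergent. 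On the central region, the identity $\theta(t\tau) - \theta(\tau) = \tau(t-1)\theta'(\xi_t \tau)$ from the mean value theorem shows that the secant is uniformly bounded on the compact set $(1/2,3/2) \times \mathrm{supp}(f)$, so Fubini applies there too. Once the two pieces are Fubini-swapped, the change of variables is applied, and they reassemble via the analogue of (\ref{eq:22}) with singularity rescaled to $x = \tau$; a final Fubini swap in $\tau$, justified by the same splitting argument applied to the $x$-integral, produces the stated formula.
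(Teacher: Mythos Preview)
Your proof is correct but takes a genuinely different route from the paper for the second assertion. The paper invokes Zemanian's Theorem~4.6.2 to represent $H\vee f$ directly as the \emph{pointwise} function $g(x) = \langle H, t^{-1}f(x/t)\rangle_t$, then performs a single change of variables $t = x/y$ in the principal-value integral; the bulk of the paper's argument is verifying that the resulting asymmetric excision $(0,x/(1+\varepsilon))\cup(x/(1-\varepsilon),\infty)$ around $y=x$ yields the same limit as the standard symmetric one. Your approach instead works weakly through the test-function pairing and needs two Fubini swaps with a change of variables in between. The first swap is well justified by your use of the splitting~(\ref{eq:22}). The second swap --- where the singularity sits at $x=\tau$ and moves with $\tau$ --- is correct but is not quite ``the same splitting argument'': the cleanest way is to truncate to $\lvert x-\tau\rvert>\varepsilon$, apply ordinary Fubini for each fixed $\varepsilon$ (absolute convergence holds since $f$ has compact support and $\theta\in L^1(\R_+)$ by the $\M_{a',b'}$ bounds with $a'>0$, $b'<1$), and then pass to the limit using dominated convergence, the secant bound on the smooth compactly supported $f$ supplying the uniform majorant. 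The paper's route is shorter because it outsources the convolution-to-pointwise identification to Zemanian; yours is more self-contained from the definitions stated in the paper but trades that for the extra limit argument.
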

\begin{proof}
  Lemma~\ref{lem:pv-Mellin-transformable} implies that $H\in\M'(0,1)$,
  and so Theorem~4.6.1 and the paragraph after it in
  \cite{zemanian69_gener_integ_trans} imply that $H\vee f \in
  \M'(a,b)$ when $f\in\M'(a,b)$.

  Let $f$ be smooth and compactly supported. We will use Theorem~4.6.2
  by Zemanian \cite{zemanian69_gener_integ_trans}. In the sense of
  distributions on $\R_+$, we have $H\vee f$ equal to the following
  smooth function
  \begin{equation}\label{eq:Hilbert-smooth}
    g(x) := \left\langle H, \frac1t f\left(\frac{x}{t}\right)
    \right\rangle_t = \lim_{\varepsilon\to0} \frac1\pi
    \left(\int_0^{1-\varepsilon} + \int_{1+\varepsilon}^\infty\right)
    \frac{f(x/t)}{1-t} \frac{dt}{t}.
  \end{equation}
  A change of integration variables $t=x/y$, $dt = -xdy/y^2$ gives
  \begin{equation}\label{eq:Hilbert-weird-pv}
    -g(x) = \lim_{\varepsilon\to0}\frac1\pi
    \left(\int_0^{x/(1+\varepsilon)} +
    \int_{x/(1-\varepsilon)}^\infty\right) \frac{f(y)}{x-y} dy
  \end{equation}
  which equals (\ref{eq:20}) by the following.

  It remains to show that
  \begin{equation}\label{eq:pv_change_variables_ok}
    \lim_{\varepsilon\to 0} \left(\int_0^{x/(1+\varepsilon)} +
    \int_{x/(1-\varepsilon)}^\infty\right) \frac{f(x/t)}{t -1}
    \frac{dt}{t} = \lim_{\varepsilon\to 0} \left(\int_0^{x
      -\varepsilon} + \int_{x +\varepsilon}^\infty\right) \frac{1}{x -
      y}f(y)dy
  \end{equation}
  for all $x$. We obtain
  \begin{align}
    \left(\int_0^{x/(1+\varepsilon)} +\int_{x/(1-\varepsilon)}^\infty
    \right) \frac{f(x/t)}{t-1} \frac{dt}{t} &= \left(
    \int_0^{x-\varepsilon x} +\int_{x+\varepsilon x}^\infty\right)
    \frac{1}{x-y} f(y) dy \notag \\ + &\left( \int_{x-\varepsilon
      x}^{x/(1+\varepsilon)} + \int_{x/(1-\varepsilon)}^{x+\varepsilon
      x} \right) \frac{1}{x-y} f(y) dy.
  \end{align}
  For any fixed $x\in (0 , \infty)$ the first terms above clearly
  converges to the right-hand side of
  (\ref{eq:pv_change_variables_ok}). For $0<\varepsilon < 1/2$ we have
  \begin{align}
    0 < \frac{1}{1 +\varepsilon} - (1 - \varepsilon) &\leq
    \varepsilon^2\label{eq:epsilon-est1}\\ 0 < \frac{1}{1 -
      \varepsilon} - (1 + \varepsilon) &\leq 2
    \varepsilon^2\label{eq:epsilon-est2}\\ 0 < 1 -
    \frac{1}{1+\varepsilon} &\leq
    \frac\varepsilon2.\label{eq:epsilon-est3}
  \end{align}
  Hence in the term
  \[
  \int_ {x -\varepsilon x}^{x/(1 +\varepsilon)} \frac{1}{x - y}f(y)dy
  \]
  we have $\abs{ x -y} \geq \varepsilon x /2$ by
  (\ref{eq:epsilon-est3}).  The length of the integration interval is
  less than $\varepsilon^2 x$ by (\ref{eq:epsilon-est1}). It follows
  that the absolute value of this term has the upper bound
  \[
  \varepsilon^2 x \cdot \frac{2}{\varepsilon x} \max\abs{f} \leq
  2\varepsilon \max\abs{f}
  \]
  and this tends to $0$ as $\varepsilon\to0$. Similarly, using
  (\ref{eq:epsilon-est2}) and (\ref{eq:epsilon-est3}), one can show
  that
  \[
  \abs{\int_{x/(1 -\varepsilon)} ^{1+\varepsilon x} \frac{1}{x -y}f(y)
    dy}\leq 2 \varepsilon \max\abs{f}
  \]
  We have thus shown
  \[
  \lim_{\varepsilon\to 0} \left(\int_0^{1 -\varepsilon} + \int_{1
    +\varepsilon}^\infty\right) \frac{f(x/t)}{t -1} \frac{dt}{t} =\Ht
  f(x)
  \]
  for every $x\in (0, \infty)$.
\end{proof}

The results of this section can be summarized as follows.
\begin{theorem}\label{thm:Hilbert-transf-summary}
  The Hilbert transform $\Ht$ applied to test functions $f\in\mathscr
  D(\R_+)$ can be written as
  \begin{equation}\label{eq:Hilbert-forms}
    \Ht f(x) = \pv \frac1\pi \int_0^\infty \frac{f(y)}{x-y} dy = - \pv
    \frac1\pi \int_0^\infty \frac{f(x/t)}{1-t} \frac{dt}{t}.
  \end{equation}
  Applied to a distribution $u\in\M'(a,b)$ with $0\leq a<b\leq1$, it
  is an element of $\M'(a,b)$ defined by $\Ht u = -H\vee u$ with
  \begin{align}
    \langle H,\phi \rangle &= \pv \frac1\pi \int_0^\infty
    \frac{\phi(t)}{1-t} dt \label{eq:Hvee-def1} \\ \langle H\vee u ,
    \theta\rangle &= \langle H, \psi \rangle, \qquad \psi(t) = \langle
    u,\theta_t \rangle,\, \theta_t(s) =
    \theta(ts) \label{eq:Hvee-def2}
  \end{align}
  for $\theta\in\M(a,b)$. Lastly, if $u\in\M'(a,b)$ with $0\leq a<b
  \leq1$ then
  \begin{equation}\label{eq:Hilbert-Mellin}
    \M[\Ht u](s) = -\cot(\pi s)\M[u](s)
  \end{equation}
  for $a < \Re(s) < b$.
\end{theorem}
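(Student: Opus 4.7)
The plan is to verify each of the three assertions by assembling results already established in the section. The theorem is essentially a summary, so the argument reduces to quoting the right lemmas and checking that their hypotheses match.

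For the first claim, the equivalence of the two integral forms for a test function $f\in\mathscr{D}(\R_+)$ is exactly the change-of-variables computation carried out inside the proof of Lemma~\ref{lem:hilbert-well-def}. The substitution $y=x/t$, $dy=-x\,dt/t^2$ transforms the classical Hilbert integral (\ref{eq:20}) into the Mellin-convolution form, and the equality of the two principal-value limits is precisely (\ref{eq:pv_change_variables_ok}), whose validity was established there using the $\varepsilon$-estimates (\ref{eq:epsilon-est1})--(\ref{eq:epsilon-est3}). I would simply cite that calculation.

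For the second claim, Definition~\ref{def:hilbert-transform} sets $\Ht u=-H\vee u$ with $H=\pi^{-1}\pv 1/(1-t)$. By Lemma~\ref{lem:pv-Mellin-transformable} we have $H\in\M'(0,1)$, and Lemma~\ref{lem:hilbert-well-def} then guarantees $H\vee u\in\M'(a,b)$ whenever $u\in\M'(a,b)$ with $0\leq a<b\leq1$. The explicit formulas (\ref{eq:Hvee-def1}) and (\ref{eq:Hvee-def2}) are direct specializations of (\ref{eq:19}) and of (\ref{eq:15}) from Theorem~\ref{thm:props-of-mellin-trans}(\ref{thm:props-of-mellin-trans:item3}) to $f=H$, $g=u$.

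For the third claim, I would apply Theorem~\ref{thm:props-of-mellin-trans}(\ref{thm:props-of-mellin-trans:item3}) to $H\vee u$. Since $S_H=S(0,1)$ and $S_u=S(a,b)\subseteq S(0,1)$, their intersection is exactly $S(a,b)$, and the convolution theorem yields
\[
\M[H\vee u](s)=\M[H](s)\,\M[u](s)
\]
on $a<\Re(s)<b$. Lemma~\ref{lem:pv-Mellin-trans} gives $\M[H](s)=\pi^{-1}\cdot\pi\cot(\pi s)=\cot(\pi s)$, so $\M[\Ht u](s)=-\M[H\vee u](s)=-\cot(\pi s)\,\M[u](s)$, as claimed. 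Since every ingredient is already in hand, I do not anticipate a serious obstacle; the only point to watch is that the boundary cases $a=0$ or $b=1$ are covered, which they are because $S(a,b)$ and $S(0,1)$ are open strips and the inclusion $S(a,b)\subseteq S(0,1)$ requires only $0\leq a<b\leq1$, not strict inequality at the endpoints.
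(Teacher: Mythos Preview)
Your proposal is correct and follows essentially the same approach as the paper's own proof, which simply cites Definition~\ref{def:hilbert-transform} and Lemma~\ref{lem:hilbert-well-def} for the first two claims and invokes the convolution theorem (\ref{eq:13}) together with Lemma~\ref{lem:pv-Mellin-trans} for the Mellin identity. Your version is slightly more explicit in tracking the hypotheses (in particular the remark about the open strips handling the endpoint cases $a=0$, $b=1$), but the structure is the same.
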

\begin{proof}
  The equations (\ref{eq:Hilbert-forms}), (\ref{eq:Hvee-def1}) and
  (\ref{eq:Hvee-def2}) are a restatement of
  Definition~\ref{def:hilbert-transform} and
  Lemma~\ref{lem:hilbert-well-def}, the latter of which gives the
  mapping properties for $\Ht$ mentioned in the
  claim. Equation~(\ref{eq:Hilbert-Mellin}) follows from (\ref{eq:13})
  in Theorem~\ref{thm:props-of-mellin-trans} and
  Lemma~\ref{lem:pv-Mellin-trans}.
\end{proof}

\section{Inhomogeneous Hilbert transform on a half-line}\label{sec:inhom-hilb-transf}
In this section we will prove that the solution $\rho$ to the equation
\begin{equation}\label{eq:Hrho-e}
  \Ht\rho = e, \qquad \R_+
\end{equation}
has a blow-up singularity at $x=0$ when $e$ is general but in a
suitable function space.

\begin{lemma}\label{lem:Hrho-zero-at-half}
  Let $0\leq a \leq \alpha < \beta \leq b \leq 1$ and let
  $e\in\M'(a,b)$, $\rho\in\M'(\alpha,\beta)$. Assume that
  (\ref{eq:Hrho-e}) holds in $\M'(\alpha,\beta)$. If $1/2 \in
  (\alpha,\beta)$ then $\M[e](1/2) = 0$.
\end{lemma}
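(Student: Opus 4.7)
The plan is to take the Mellin transform of both sides of $\Ht\rho = e$ and evaluate at $s = 1/2$, where $\cot(\pi s)$ has its unique zero in the strip $0<\Re(s)<1$. The hypotheses give $0\leq\alpha<\beta\leq1$, so Theorem~\ref{thm:Hilbert-transf-summary} applies directly to $\rho\in\M'(\alpha,\beta)$ and yields the multiplier identity
\[
\M[\Ht\rho](s) = -\cot(\pi s)\,\M[\rho](s)
\]
on $\alpha<\Re(s)<\beta$.

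First I would reconcile the ambient spaces. Since $a\leq\alpha<\beta\leq b$, every test function $\phi\in\M(\alpha,\beta)$ also lies in $\M(a,b)$, so $e\in\M'(a,b)$ restricts canonically to an element of $\M'(\alpha,\beta)$. In particular the equation $\Ht\rho=e$ makes sense in $\M'(\alpha,\beta)$ as the hypothesis asserts. By Lemma~\ref{lem:mell-transf-holom} the Mellin transforms $\M[e]$ and $\M[\Ht\rho]$ are both holomorphic on $S(\alpha,\beta)$, and by Item~\ref{thm:props-of-mellin-trans:item1} of Theorem~\ref{thm:props-of-mellin-trans} equality in $\M'(\alpha,\beta)$ forces equality of Mellin transforms on $S(\alpha,\beta)$, giving
\[
\M[e](s) = -\cot(\pi s)\,\M[\rho](s), \qquad \alpha<\Re(s)<\beta.
\]

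Next I would evaluate at $s=1/2$, which lies in $(\alpha,\beta)$ by assumption. The right-hand side is the product of $\cot(\pi s)$ and $\M[\rho](s)$, both of which are well-defined there: $\M[\rho]$ is holomorphic on $S(\alpha,\beta)$ and hence finite at $1/2$, and $\cot(\pi\cdot 1/2) = 0$. Therefore $\M[e](1/2) = 0$, as claimed.

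There is no real obstacle here; the argument is just a direct application of the Mellin--Hilbert multiplier identity established in Theorem~\ref{thm:Hilbert-transf-summary} together with the uniqueness of Mellin transforms (Item~\ref{thm:props-of-mellin-trans:item1} of Theorem~\ref{thm:props-of-mellin-trans}). The only mildly non-trivial bookkeeping is checking that the equation $\Ht\rho=e$ can legitimately be restricted to the smaller strip $S(\alpha,\beta)$, which follows from the nesting $\M(\alpha,\beta)\subset\M(a,b)$ of test function spaces.
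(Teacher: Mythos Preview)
Your proposal is correct and follows essentially the same route as the paper: apply the Mellin--Hilbert multiplier identity from Theorem~\ref{thm:Hilbert-transf-summary} to obtain $\M[e](s)=-\cot(\pi s)\M[\rho](s)$ on $S(\alpha,\beta)$, note via Lemma~\ref{lem:mell-transf-holom} that $\M[\rho]$ is holomorphic (hence finite) at $s=1/2$, and evaluate there using $\cot(\pi/2)=0$. Your only addition is the explicit remark about the inclusion $\M(\alpha,\beta)\subset\M(a,b)$ justifying the restriction of $e$; the paper handles this in a single clause by invoking $\M'(a,b)\subset\M'(\alpha,\beta)$.
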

\begin{proof}
  Take the Mellin transform of (\ref{eq:Hrho-e}). By
  Theorem~\ref{thm:Hilbert-transf-summary} we have
  \[
  -\cot(\pi s)\M[\rho](s) = \M[e](s)
  \]
  for $\alpha < \Re(s) < \beta$. In particular this hold at $s=1/2$ if
  this point belongs to the interval $(\alpha,\beta)$. Since $\M'(a,b)
  \subset \M'(\alpha,\beta)$, we have
  $\rho,e\in\M'(\alpha,\beta)$. Then by
  Lemma~\ref{lem:mell-transf-holom} both $\M[\rho]$ and $\M[e]$ are
  holomorphic in a complex neighbourhood of $s=1/2$; in particular
  $\M[\rho](1/2)$ is a well-defined finite complex number. Since
  $\cot(\pi/2)=0$, a value not changed by multiplying with a complex
  number, we have $\M[e](1/2)=0$.
\end{proof}

\begin{lemma}\label{lem:tangent-estimates}
  Let $x,y\in\R$. If $x$ is at least $\varepsilon>0$ distance from
  $1/2+\Z$ then
  \begin{equation}\label{eq:tan-est-x-fixed}
    \abs{\tan\big(\pi(x+iy)\big)}^2 \leq
    \big(\cos\pi(1-2\varepsilon)+1\big)^{-2}
  \end{equation}
  which is finite when such an $x$ exists. Otherwise, if $\abs{y}=M>0$
  we have
  \begin{equation}\label{eq:tan-est-y-fixed}
    \abs{\tan\big(\pi(x+iy)\big)}^2 \leq \big(1-(\cosh(2\pi
    M))^{-1}\big)^{-2}
  \end{equation}
  which is always finite, and at most $4$ when $M>1/\pi$.
\end{lemma}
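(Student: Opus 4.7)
The plan is to reduce everything to one explicit identity: expanding $\sin(\pi(x+iy))$ and $\cos(\pi(x+iy))$ by the sum-of-angles formulas and simplifying with $\cosh^2-\sinh^2=1$ gives
\[
\abs{\tan(\pi(x+iy))}^2 = \frac{\sin^2(\pi x)+\sinh^2(\pi y)}{\cos^2(\pi x)+\sinh^2(\pi y)},
\]
after which both estimates follow by regarding the right-hand side as a function of one of the two variables while keeping the other fixed.

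For the first estimate I fix $x$ and treat the right-hand side as the map $u\mapsto (a+u)/(b+u)$ with $a=\sin^2(\pi x)$, $b=\cos^2(\pi x)$, $u=\sinh^2(\pi y)\in[0,\infty)$. Its derivative has the constant sign of $b-a$, so the map is monotone and its supremum over $u\geq 0$ equals $\max(a/b,1)=\max(\tan^2(\pi x),1)$. The distance hypothesis forces $\abs{\cos(\pi x)}\geq\sin(\pi\varepsilon)$, and hence $\tan^2(\pi x)\leq\cot^2(\pi\varepsilon)$. Using the half-angle identity $\cos(\pi(1-2\varepsilon))+1=1-\cos(2\pi\varepsilon)=2\sin^2(\pi\varepsilon)$, the stated bound reads $1/(4\sin^4(\pi\varepsilon))$, and it dominates $\cot^2(\pi\varepsilon)$ because $4\sin^2(\pi\varepsilon)\cos^2(\pi\varepsilon)=\sin^2(2\pi\varepsilon)\leq 1$.

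For the second estimate I fix $y$ with $\abs{y}=M>0$ and let $x$ vary; the supremum is attained as $\cos^2(\pi x)\to 0$ (that is, at $x\in 1/2+\Z$) and equals
\[
\frac{1+\sinh^2(\pi M)}{\sinh^2(\pi M)}=\coth^2(\pi M)=\frac{\cosh(2\pi M)+1}{\cosh(2\pi M)-1},
\]
using the double-angle identities $\cosh(2t)=1+2\sinh^2 t=2\cosh^2 t-1$. The claimed right-hand side rewrites as $\bigl(\cosh(2\pi M)/(\cosh(2\pi M)-1)\bigr)^2$, so the desired inequality reduces to $\cosh^2(2\pi M)-1\leq\cosh^2(2\pi M)$, which is trivial. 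Finally, since $\cosh(2)>2$, for $M>1/\pi$ we have $1-1/\cosh(2\pi M)>1/2$, so the bound is strictly less than $4$.

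The argument is essentially bookkeeping; the only real care is to manipulate the trigonometric and hyperbolic identities so that the resulting upper bounds match the exact form of the statement, and to ensure that the monotonicity step in the first estimate correctly accommodates both signs of $b-a$.
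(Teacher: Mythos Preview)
Your argument is correct and takes a genuinely different route from the paper. The paper works with the double-angle identity
\[
\abs{\tan\big(\pi(x+iy)\big)}^2=\frac{\sin^2(2\pi x)+\cosh^2(2\pi y)-1}{\big(\cos(2\pi x)+\cosh(2\pi y)\big)^2}
\]
and then bounds numerator and denominator separately by the crude inequalities $\sin^2\leq 1$, $\cosh\geq 1$, and $\cos(2\pi x)\geq\cos\pi(1-2\varepsilon)$. You instead use the single-angle form $(\sin^2(\pi x)+\sinh^2(\pi y))/(\cos^2(\pi x)+\sinh^2(\pi y))$ and a monotonicity argument in one variable to locate the \emph{exact} supremum, then match it to the stated bound via half-angle identities. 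Your approach is sharper---you actually find the extremal value---at the cost of a little more algebraic bookkeeping to massage it into the form appearing in the statement.

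One small point worth making explicit: in the first estimate you correctly identify the supremum over $y$ as $\max(\tan^2(\pi x),1)$, but then only verify that the stated bound $1/(4\sin^4(\pi\varepsilon))$ dominates $\cot^2(\pi\varepsilon)\geq\tan^2(\pi x)$; the branch where the maximum equals $1$ is not addressed. This is harmless for $\varepsilon\leq 1/4$, since then $\cot^2(\pi\varepsilon)\geq 1$ and bounding $\cot^2(\pi\varepsilon)$ already suffices. For $\varepsilon>1/4$ the inequality in the lemma is in fact false (take $x\in\Z$ and let $y\to\infty$, so $\abs{\tan}^2\to 1$ while the stated bound is below $1$); the paper's own proof has the matching lacuna at the step ``$-1<\cos\pi(1-2\varepsilon)\leq 0$'', and in the applications only small $\varepsilon$ ever occurs.
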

\begin{proof}
  We start with the trigonometric identity
  \begin{equation}\label{eq:tan-id}
    \tan\big(\pi(x+iy)\big) = \frac{\sin(2\pi x) + i \sinh(2\pi
      y)}{\cos(2\pi x) + \cosh(2\pi y)}.
  \end{equation}
  Taking the square of the modulus and using $\sinh^2(2\pi y) =
  \cosh^2(2\pi y)-1$ we get
  \begin{equation}\label{eq:tan-squared-id}
    \abs{\tan\big(\pi(x+iy)\big)}^2 = \frac{\sin^2(2\pi x) +
      \cosh^2(2\pi y)-1}{\big(\cos(2\pi x) + \cosh(2\pi y)\big)^2}.
  \end{equation}

  If $x$ is at least distance $\varepsilon>0$ from $1/2+\Z$, we must
  have $0<\varepsilon\leq1/2$. Then $\cos(2\pi x) \geq
  \cos(2\pi(1/2-\varepsilon))$, and since $\cosh(2\pi y)\geq1$ and
  $\sin^2(2\pi x)\leq1$, we get
  \[
  \abs{\tan\big(\pi(x+iy)\big)}^2 \leq \frac{\cosh^2(2\pi
    y)}{\big(\cos\pi(1-2\varepsilon) + \cosh(2\pi y)\big)^2}.
  \]
  This implies (\ref{eq:tan-est-x-fixed}) after reducing the fraction
  by its numerator, noting that $-1<\cos\pi(1-2\varepsilon)\leq0$ and
  using $\cosh(2\pi y) \geq 1$.

  Now, if we just have $\abs{y}=M>0$, we can estimate $\cos(2\pi
  x)\geq-1$ and $\sin^2(2\pi x) \leq 1$ in (\ref{eq:tan-squared-id})
  to get
  \[
  \abs{\tan\big(\pi(x+iy)\big)}^2 \leq \frac{\cosh^2(2\pi y)}{\big(-1
    + \cosh(2\pi y)\big)^2}.
  \]
  However since $M>0$ and the evenness of the hyperbolic cosine, we
  have $\cosh(2\pi y)=\cosh(2\pi M)>1$ so the right-hand side is a
  finite constant depending on $M$. The last claim follows since
  $M>1/\pi$ implies that $\cosh(2\pi M) > 2$.
\end{proof}

\begin{lemma}\label{lem:explicit-solution}
  Let $e\in\M'(a,b)$ for some $0\leq a<b\leq1$.  If
  \[
  a<b \leq 1/2, \qquad\text{or}\qquad 1/2\leq a<b,
  \qquad\text{or}\qquad \M[e](1/2) = 0
  \]
  then there is $\rho\in\M'(a,b)$ satisfying $\Ht\rho=e$. Furthermore,
  for any $\alpha,\beta,c$ with $a<\alpha<c<\beta<b$ for this $\rho$
  it holds that
  \begin{equation}
    \rho(t) = \frac{-1}{2\pi i} \left(-t\, d/dt\right)^{m+2}
    \int_{c-i\infty}^{c+i\infty} s^{-m-2} \tan(\pi s) \M[e](s) t^{-s}
    ds
  \end{equation}
  in $\M'(\alpha,\beta)$. Here $m\in\mathbb N$ can be any number for
  which there is a polynomial $P$ of degree $m$ such that
  $\abs{\M[e](s)}\leq P(\abs{x})$ on $S(\alpha,\beta)$.

  In the case where
  \[
  a < 1/2 < b, \qquad\text{and}\qquad \M[e](1/2)\neq0
  \]
  there are no solutions in any $\M'(\alpha,\beta)$ with
  $\alpha<1/2<\beta$. Instead there is $\rho_-\in\M'(a,1/2)$ and
  $\rho_+\in\M'(1/2,b)$ such that $\Ht\rho_\pm=e$ in $\M'(a,1/2)$ and
  $\M'(1/2,b)$, respectively. They satisfy
  \begin{align}
    \rho_-(t) &= \frac{-1}{2\pi i} \left(-t\, d/dt\right)^{m+2}
    \int_{c_--i\infty}^{c_-+i\infty} s^{-m-2} \tan(\pi s) \M[e](s)
    t^{-s} ds,\label{eq:rho-m-formula}\\ \rho_+(t) &= \frac{-1}{2\pi
      i} \left(-t\, d/dt\right)^{m+2} \int_{c_+-i\infty}^{c_++i\infty}
    s^{-m-2} \tan(\pi s) \M[e](s) t^{-s} ds\label{eq:rho-p-formula}
  \end{align}
  in $\M'(\alpha_-,\beta_-)$ and $\M'(\alpha_+,\beta_+)$,
  respectively, for any $a<\alpha_- < c_- < \beta_- < 1/2$ and $1/2 <
  \alpha_+ < c_+ < \beta_+ < b$. Here $m\in\mathbb N$ can be any
  number for which there is a polynomial $P$ of degree $m$ such that
  $\abs{\M[e](s)}\leq P(\abs{x})$ on $S(\alpha_-,\beta_+)$.
\end{lemma}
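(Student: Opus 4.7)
The plan is to pass to the Mellin side, where Theorem~\ref{thm:Hilbert-transf-summary} converts $\Ht\rho = e$ into the algebraic identity $-\cot(\pi s)\,\M[\rho](s) = \M[e](s)$. Setting $F(s) = -\tan(\pi s)\,\M[e](s)$, the strategy is to realise $F$ as the Mellin transform of some $\rho$ via Corollary~\ref{cor:invert-poly-growth}, and then verify $\Ht\rho = e$ by Mellin-transform uniqueness.

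First I would analyse where $F$ is holomorphic. In the strip $S(0,1)$, $\tan(\pi s)$ has a unique simple pole at $s=1/2$, while $\M[e]$ is holomorphic on $S(a,b)$ by Lemma~\ref{lem:mell-transf-holom}. Thus $F$ is holomorphic on all of $S(a,b)$ in three situations: $b \leq 1/2$, $a \geq 1/2$, or $1/2 \in (a,b)$ together with $\M[e](1/2) = 0$ (the simple zero cancelling the simple pole). When $a < 1/2 < b$ and $\M[e](1/2)\neq 0$, $F$ has a genuine pole at $1/2$ and is only holomorphic on $S(a,1/2)$ and $S(1/2,b)$ separately. In this last case, non-existence of a solution on any $\M'(\alpha,\beta)$ with $\alpha < 1/2 < \beta$ follows at once from Lemma~\ref{lem:Hrho-zero-at-half}.

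Next I would verify the hypothesis of Corollary~\ref{cor:invert-poly-growth} with $Q(s) = s^{m+2}$. On any closed substrip avoiding $1/2 + \Z$, Lemma~\ref{lem:tangent-estimates} gives a uniform bound on $\abs{\tan(\pi s)}$, and item~\ref{thm:props-of-mellin-trans:item2} of Theorem~\ref{thm:props-of-mellin-trans} gives $\abs{\M[e](s)} \leq P(\abs{s})$ for some polynomial $P$ of degree $m$; hence $\abs{F(s)} \leq C\,P(\abs{s})$ there. In the removable-singularity case $\M[e](1/2)=0$, $F$ is additionally bounded in a complex neighbourhood of $1/2$, so the same estimate extends to closed substrips crossing $\Re s = 1/2$. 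Since $a \geq 0$, the polynomial $Q$ has no zero in the relevant strip, and $\abs{F(s)/Q(s)} \leq K/\abs{s}^2$ for large $\abs{s}$. The Corollary then produces a continuous $g(t) = (2\pi i)^{-1}\int F(s)/Q(s)\,t^{-s}\,ds$, and $\rho := Q(-t\,d/dt)\,g$ lies in $\M'(\alpha,\beta)$ with $\M[\rho] = F$. Unfolding the definitions recovers the explicit formulas in the statement; independence of the contour $c$ is part of the Corollary, and varying $\alpha, \beta$ to exhaust the strip upgrades $\rho$ to an element of $\M'(a,b)$, $\M'(a,1/2)$ or $\M'(1/2,b)$ as required.

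Finally, $\Ht\rho$ has Mellin transform $-\cot(\pi s)\,\M[\rho](s) = -\cot(\pi s)\,F(s) = \M[e](s)$ away from the poles, hence on the full strip by holomorphicity; item~\ref{thm:props-of-mellin-trans:item1} of Theorem~\ref{thm:props-of-mellin-trans} then gives $\Ht\rho = e$. The main obstacle is the interplay at $s = 1/2$: one must correctly identify the trichotomy (pole absent, pole cancelled by zero, pole genuine), and in the pole-cancelled case confirm that $F$ still satisfies the polynomial-growth hypothesis across $\Re s = 1/2$. This is what forces the case split in the statement, and it is the reason why, in the genuinely singular case, a single $\rho$ cannot exist — instead one gets two distinct $\rho_\pm$ on the two half-strips.
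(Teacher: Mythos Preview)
Your proposal is correct and follows essentially the same route as the paper: define $F(s)=-\tan(\pi s)\M[e](s)$, use Lemma~\ref{lem:tangent-estimates} together with item~\ref{thm:props-of-mellin-trans:item2} of Theorem~\ref{thm:props-of-mellin-trans} to control $F$ polynomially on closed substrips (handling the removable singularity at $s=1/2$ separately when $\M[e](1/2)=0$), invert via $Q(s)=s^{m+2}$, and then check $\Ht\rho=e$ through $\M[\Ht\rho]=-\cot(\pi s)F(s)=\M[e](s)$ and item~\ref{thm:props-of-mellin-trans:item1}. The only organizational difference is that the paper first invokes item~\ref{thm:props-of-mellin-trans:item2} directly to obtain $\rho\in\M'(a,b)$ and only afterwards appeals to Corollary~\ref{cor:invert-poly-growth} for the explicit formula on substrips, whereas you build $\rho$ substrip by substrip from the Corollary and then ``exhaust''; since you have already verified the polynomial bound on every closed substrip of $S(a,b)$, that exhaustion step is nothing more than a direct application of item~\ref{thm:props-of-mellin-trans:item2}, so the two arguments coincide.
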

\begin{proof}
  Write $F(s) = -\tan(\pi s)\M[e](s)$. Then $F:S(a,b)\to\C$ is
  holomorphic everywhere except at $s=1/2$ if $\M[e](1/2)\neq0$. We
  want to use the Mellin transform inversion formula. For that we need
  to show an estimate for $\abs{F(s)}$ that holds uniformly in a
  vertical strip of the complex plane.

  Let us first consider the case ``$a<b \leq 1/2$, $1/2\leq a<b$, or
  $\M[e](1/2) = 0$''. In that case $F$ is holomorphic on $S(a,b)$. We
  want to let $\rho$ be the inverse Mellin transform of $F$, but for
  that we need to prove some estimates first, so that we can use
  Item~\ref{thm:props-of-mellin-trans:item2} of
  Theorem~\ref{thm:props-of-mellin-trans}.

  Consider an arbitrary closed substrip $\alpha_1 \leq \Re(s) \leq
  \alpha_2$ of $S(a,b)$. If it contains $s=1/2$ then our assumptions
  imply that $\M[e](1/2)=0$, in which case $\abs{F(1/2)}<\infty$ so
  there is $r>0$ and $C<\infty$ such that $\abs{F(s)}<C$ when
  $\abs{s-1/2}<r$. When $\abs{s-1/2}\geq r$ we have
  \[
  \abs{\tan(\pi s)} \leq C_r
  \]
  by Lemma~\ref{lem:tangent-estimates}. Furthermore there is some
  polynomial $P$ such that
  \begin{equation}\label{eq:mellin-e-polynomial}
    \abs{\M[e](s)} \leq P(\abs{s})
  \end{equation}
  on that closed vertical strip by
  Item~\ref{thm:props-of-mellin-trans:item2} of
  Theorem~\ref{thm:props-of-mellin-trans}. In both cases whether
  $\alpha_1\leq1/2\leq\alpha_2$ or not, there is thus some finite
  constant $K$ for which
  \begin{equation}\label{eq:F-bound-case-1}
    \abs{F(s)}\leq K\big( 1 + P(\abs{s}) \big)
  \end{equation}
  when $\alpha_1 \leq \Re(s) \leq \alpha_2$. Because this is an
  arbitrary vertical closed substrip of $S(a,b)$ then by the same item
  of the same theorem we see that there is $\rho\in\M'(a,b)$ such that
  $\M\rho = F$ on $S(a,b)$.

  Next, by the Mellin transform formula for the Hilbert transform of
  Theorem~\ref{thm:Hilbert-transf-summary}, we have
  \begin{equation}\label{eq:rho-sol-case1}
    \M [\Ht\rho] (s) = -\cot(\pi s)(-\tan(\pi s)) \M[e](s) = \M[e](s)
  \end{equation}
  for $s\in S(a,b)$. So by the uniqueness of the inverse Mellin
  transform (Item~\ref{thm:props-of-mellin-trans:item1} of
  Theorem~\ref{thm:props-of-mellin-trans}) we have $\Ht\rho=e$ in
  $\M'(a,b)$.

  Next, let $\alpha,\beta,c$ be as in the assumptions. Then, as in
  (\ref{eq:mellin-e-polynomial}), we see that there is a polynomial
  $P$ such that $\abs{\M[e](s)}\leq P(\abs{s})$ for
  $\alpha\leq\Re(s)\leq\beta$. Let $Q(x)=x^{2+m}$, $m=\deg P$. By the
  estimate for $\abs{F(s)}$ from (\ref{eq:F-bound-case-1}) we have
  \[
  \abs{\frac{F(s)}{Q(s)}} \leq \frac{K \big( 1 + P(\abs{s})
    \big)}{\abs{s}^2 \abs{s}^m} \leq \frac{C}{\abs{s}^2}
  \]
  when $\alpha\leq \Re(s)\leq\beta$. If we set
  \[
  f(t) = \frac{1}{2\pi i} (-t d/dt)^{m+2} \int_{c-i\infty}^{c+i\infty}
  s^{-m-2} F(s) t^{-s} ds
  \]
  then the integral gives a continuous function $\R_+\to\C$ that's in
  $\M'(\alpha,\beta)$, and also $f\in \M'(\alpha,\beta)$ satisfies $\M
  f = F$ in $S(\alpha,\beta)$ by
  Corollary~\ref{cor:invert-poly-growth}. Because $\M\rho =F$ in
  $S(a,b)$ we have $f=\rho$ in $\M'(\alpha,\beta)$ by
  Item~\ref{thm:props-of-mellin-trans:item1} of
  Theorem~\ref{thm:props-of-mellin-trans}. This concludes the proof of
  the first case.

  \smallskip
  In the case where $a<1/2<b$ and $\M[e](1/2)\neq0$ there are no
  solutions in $\M'(a,b)$ by Lemma~\ref{lem:Hrho-zero-at-half}. Note
  also that in this case $F$ is holomorphic in $S(a,1/2)$ and
  $S(1/2,b)$ while having a singularity at $s=1/2$. Consider the
  closed vertical strips $\alpha_1\leq\Re(s)\leq\alpha_2$ and
  $\beta_1\leq\Re(s)\leq\beta_2$ for arbitrary
  $a<\alpha_1<\alpha_2<1/2$ and $1/2<\beta_1<\beta_2<b$.  As in the
  first part of the proof, we see that
  \[
  \abs{\tan(\pi s)} \leq C_{\alpha_2,\beta_1}
  \]
  by Lemma~\ref{lem:tangent-estimates} when $s$ belongs to either of
  these two closed strips because $\alpha_2<1/2$ and $1/2<\beta_1$. As
  before, we have
  \[
  \abs{\M[e](s)} \leq P(\abs{s})
  \]
  on $\alpha_1\leq\Re(s)\leq\beta_2$ by
  Item~\ref{thm:props-of-mellin-trans:item2} of
  Theorem~\ref{thm:props-of-mellin-trans}. These two estimates give a
  polynomial upper bound for $\abs{F(s)}$ on
  $\alpha_1\leq\Re(s)\leq\alpha_2$ and on
  $\beta_1\leq\Re(s)\leq\beta_2$ as in the first part of the
  proof. Since the closed substrips were arbitrary, these then imply
  the existence of $\rho_-\in\M'(a,1/2)$ and $\rho_+\in\M'(1/2,b)$
  satisfying $\Ht\rho_\pm=e$ in $\M'(a,1/2)$ and $\M'(1/2,b)$,
  respectively. With identical deductions as in the first part of the
  poof, we see the integral representation formulas for $\rho_\pm$ in
  $\M'(\alpha_\pm,\beta_\pm)$.
\end{proof}

\begin{lemma}\label{lem:uniqueness-in-part}
  Let $0\leq a<b\leq 1$ and $\rho_1,\rho_2\in\M'(a,b)$. If
  \[
  \Ht\rho_1 = \Ht\rho_2
  \]
  then $\rho_1=\rho_2$ in $\M'(a,b)$.
\end{lemma}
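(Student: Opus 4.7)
The plan is to reduce to the homogeneous case and apply the Mellin transform, exploiting the fact that $\cot(\pi s)$ has exactly one zero in the relevant strip together with the identity theorem for holomorphic functions.

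First I would set $\rho = \rho_1 - \rho_2$. Since $\M'(a,b)$ is a vector space and the Hilbert transform, being defined via Mellin convolution with the fixed distribution $H$ (Definition~\ref{def:hilbert-transform}), is linear, we have $\rho \in \M'(a,b)$ and $\Ht\rho = 0$ in $\M'(a,b)$. The problem is then reduced to showing $\rho = 0$.

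Next I would take the Mellin transform. By Equation~(\ref{eq:Hilbert-Mellin}) of Theorem~\ref{thm:Hilbert-transf-summary}, applied in $S(a,b)$,
\[
-\cot(\pi s)\, \M[\rho](s) = \M[\Ht\rho](s) = 0, \qquad a < \Re(s) < b.
\]
Now the key observation is that $\cot(\pi s) = \cos(\pi s)/\sin(\pi s)$ has zeros only at $s \in \tfrac{1}{2} + \Z$, so inside the vertical strip $S(a,b) \subseteq S(0,1)$ it can vanish only at the single point $s = 1/2$ (and only in the sub-case $a < 1/2 < b$). Therefore, on the open set $S(a,b) \setminus \{1/2\}$, we may divide out and obtain $\M[\rho](s) = 0$.

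By Lemma~\ref{lem:mell-transf-holom}, $\M[\rho]$ is holomorphic on the connected open set $S(a,b)$. Since it vanishes on the open subset $S(a,b) \setminus \{1/2\}$ (in fact, vanishes on an open neighborhood of any point $\neq 1/2$), the identity theorem for holomorphic functions forces $\M[\rho] \equiv 0$ on all of $S(a,b)$, including $s = 1/2$. Finally, applying the uniqueness statement of Item~\ref{thm:props-of-mellin-trans:item1} of Theorem~\ref{thm:props-of-mellin-trans} (taking $f = \rho$ and $g = 0$), we conclude $\rho = 0$ in $\M'(a,b)$, that is $\rho_1 = \rho_2$.

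There is no real obstacle here: the only mild subtlety is handling the potential zero of $\cot(\pi s)$ at $s = 1/2$, but this is a single isolated point inside the strip of holomorphicity of $\M[\rho]$ and so it is absorbed harmlessly by the identity theorem. All other ingredients (linearity of $\Ht$, the Mellin symbol $-\cot(\pi s)$, holomorphicity and injectivity of the Mellin transform on $\M'(a,b)$) are already established earlier in the excerpt.
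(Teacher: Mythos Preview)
Your proof is correct and follows essentially the same route as the paper: apply the Mellin transform using Theorem~\ref{thm:Hilbert-transf-summary}, divide by $\cot(\pi s)$ away from $s=1/2$, and then use holomorphicity of $\M[\rho_1]-\M[\rho_2]$ on $S(a,b)$ to remove the isolated exception before invoking Item~\ref{thm:props-of-mellin-trans:item1} of Theorem~\ref{thm:props-of-mellin-trans}. The only cosmetic difference is that you first pass to the difference $\rho=\rho_1-\rho_2$ and reduce to the homogeneous equation, whereas the paper keeps $\rho_1,\rho_2$ separate throughout.
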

\begin{proof}
  By taking the Mellin transform of the equation, and using the
  transformation properties of the Hilbert transform from
  Theorem~\ref{thm:Hilbert-transf-summary} we see that
  \[
  -\cot(\pi s)\M[\rho_1](s) = -\cot(\pi s)\M[\rho_2](s)
  \]
  for $s\in S(a,b)$. When $s\neq1/2$ we can divide by the cotangent
  and get
  \[
  \M[\rho_1](s) = \M[\rho_2](s)
  \]
  for $s\in S(a,b)\setminus\{1/2\}$. But $\rho_1-\rho_2\in\M'(a,b)$ so
  $\M[\rho_1] - \M[\rho_2]$ is holomorphic in $S(a,b)$. Hence the
  equality holds in the whole $S(a,b)$. According to the properties of
  Mellin transform in Theorem~\ref{thm:props-of-mellin-trans} we have
  $\rho_1=\rho_2$ in $\M'(a,b)$.
\end{proof}

\begin{lemma}\label{lem:tan-residue}
  The residue of $\tan(\pi s)$ at $s=1/2$ is given by
  \[
  \operatorname{Res}\big(\tan(\pi s),1/2\big) = -\frac1\pi.
  \]
\end{lemma}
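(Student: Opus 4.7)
The plan is to reduce the residue computation at $s = 1/2$ to a standard Laurent expansion near the pole. First I would verify that $\tan(\pi s) = \sin(\pi s)/\cos(\pi s)$ has a simple pole at $s = 1/2$: the denominator $\cos(\pi s)$ has a simple zero there because $\cos(\pi/2) = 0$ and its derivative $-\pi \sin(\pi \cdot 1/2) = -\pi \neq 0$, while the numerator satisfies $\sin(\pi/2) = 1 \neq 0$.

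The cleanest route is then to apply the standard formula for the residue of a ratio at a simple zero of the denominator, namely
\[
\operatorname{Res}\bigl(\tan(\pi s), 1/2\bigr) = \frac{\sin(\pi s)}{\frac{d}{ds}\cos(\pi s)} \bigg|_{s=1/2} = \frac{1}{-\pi \sin(\pi/2)} = -\frac{1}{\pi}.
\]
Alternatively, and perhaps more transparently, I would write $s = 1/2 + w$ and use the identity $\tan(\pi/2 + \pi w) = -\cot(\pi w)$; then $-\cot(\pi w) = -\frac{\cos(\pi w)}{\sin(\pi w)} = -\frac{1 + O(w^2)}{\pi w + O(w^3)} = -\frac{1}{\pi w} + O(w)$, so the coefficient of $w^{-1}$ is $-1/\pi$, which is the residue.

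There is no real obstacle here; the statement is a direct computation from elementary trigonometric identities. Either of the two presentations above suffices, and I would choose the substitution approach since it simultaneously displays the Laurent expansion, which is the natural form the authors will likely need in the subsequent argument that pins down the jump $\rho_+ - \rho_-$ in Theorem~\ref{thm:with-singularity}.
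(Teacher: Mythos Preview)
Your proof is correct and your first route (the quotient-rule residue formula $\sin(\pi s)/\tfrac{d}{ds}\cos(\pi s)\big|_{s=1/2}$) is essentially the paper's argument: the paper writes out the same computation as the explicit limit $\lim_{s\to1/2}(s-1/2)/\cos(\pi s)$ and evaluates it via the derivative of cosine. Your alternative substitution $s=1/2+w$ with $\tan(\pi/2+\pi w)=-\cot(\pi w)$ is a minor variant that also works.
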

\begin{proof}
  We have $\sin(\pi/2)=1$ and $\cos(\pi/2)=0$ so the residue is given
  by the cosine. Then
  \begin{align*}
    \lim_{s\to1/2}\frac{s-1/2}{\cos(\pi s)} &= \lim_{s\to1/2}
    \frac1\pi \frac{\pi s-\pi/2}{\cos(\pi s)-\cos(\pi/2)} = \frac1\pi
    \lim_{\xi\to\pi/2}\frac{1}{\frac{\cos\xi-\cos(\pi/2)}{\xi-\pi/2}}
    \\ &= \frac1\pi \frac{1}{\cos'(\pi/2)} = -\frac1\pi
    \frac{1}{\sin(\pi/2)} = -\frac1\pi.
  \end{align*}
  Thus $\operatorname{Res}(\tan(\pi s),1/2) =
  \lim_{s\to1/2}(s-1/2)\tan(\pi s) = -1/\pi$.
\end{proof}

\begin{lemma}\label{lem:cauchy-integral}
  Let $0<\alpha<1/2<\beta<1$ and $f:S(\alpha,\beta)\to\C$ be
  holomorphic with $\abs{f(s)}\leq C s^m$ for some $m\in\mathbb
  N$. For $\alpha<c_-<1/2<c_+<\beta$ define
  \begin{align*}
    \bar\rho_-(t) &= \frac{-1}{2\pi i}
    \int_{c_--i\infty}^{c_-+i\infty} s^{-m-2} \tan(\pi s) f(s) t^{-s}
    ds, \\ \bar\rho_+(t) &= \frac{-1}{2\pi i}
    \int_{c_+-i\infty}^{c_++i\infty} s^{-m-2} \tan(\pi s) f(s) t^{-s}
    ds.
  \end{align*}
  Then
  \begin{equation}\label{eq:cauchy-result}
    \bar\rho_+(t) = \frac{2^{m+2}}\pi f(\tfrac12) t^{-1/2} +
    \bar\rho_-(t)
  \end{equation}
  for all $t\in\mathbb R_+$.
\end{lemma}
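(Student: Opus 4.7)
The plan is a straightforward application of the residue theorem to the integrand
\[
  h(s) = s^{-m-2}\tan(\pi s) f(s) t^{-s}
\]
over a tall rectangular contour with vertical sides on $\Re s = c_-$ and $\Re s = c_+$ and horizontal sides at $\Im s = \pm R$. Since $0<\alpha<c_-<1/2<c_+<\beta<1$, the factor $s^{-m-2}$ is holomorphic on and inside the rectangle (its only singularity is at $s=0$, which lies to the left of $c_-$), $f$ is holomorphic by assumption, and $\tan(\pi s)$ has its only pole in $S(\alpha,\beta)$ at $s=1/2$. Hence $s=1/2$ is the unique singularity enclosed.

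First, I would orient the contour counterclockwise around $s=1/2$: upward on $\Re s=c_+$, leftward along the top, downward on $\Re s=c_-$, and rightward along the bottom. The residue theorem gives
\[
  \oint h(s)\,ds = 2\pi i \operatorname{Res}_{s=1/2} h(s).
\]
By Lemma~\ref{lem:tan-residue}, $\operatorname{Res}(\tan(\pi s),1/2)=-1/\pi$, and since $s^{-m-2}f(s)t^{-s}$ is holomorphic near $1/2$, the residue is
\[
  \operatorname{Res}_{s=1/2} h(s) = -\frac{1}{\pi}\,(1/2)^{-m-2}\,f(1/2)\,t^{-1/2} = -\frac{2^{m+2}}{\pi}\,f(1/2)\,t^{-1/2}.
\]

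Next, I would show the two horizontal segments contribute nothing in the limit $R\to\infty$. On such a segment $\Re s$ ranges over the compact interval $[c_-,c_+]$, so $\lvert t^{-s}\rvert=t^{-\Re s}$ is bounded (by $\max(t^{-c_-},t^{-c_+})$) uniformly in $R$ and for $t$ fixed. The bound $\lvert f(s)\rvert \leq C\lvert s\rvert^m$ gives polynomial growth of order $m$, while $\lvert s^{-m-2}\rvert = O(R^{-m-2})$ on the horizontal sides, and by Lemma~\ref{lem:tangent-estimates} (with $M=R>1/\pi$ eventually) $\lvert\tan(\pi s)\rvert \leq 2$ uniformly. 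Multiplied together the integrand is $O(R^{-2})$, and integrating over an interval of length $c_+-c_-$ yields a bound of $O(R^{-2})$ which vanishes as $R\to\infty$.

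Combining: the vertical pieces, in the chosen orientation, contribute $-(-2\pi i \bar\rho_-(t)) + (-2\pi i \bar\rho_+(t)) = 2\pi i\bigl(\bar\rho_-(t)-\bar\rho_+(t)\bigr)$, which by the residue computation equals $-2^{m+3}i\,f(1/2)\,t^{-1/2}$. Dividing by $2\pi i$ yields
\[
  \bar\rho_+(t)-\bar\rho_-(t) = \frac{2^{m+2}}{\pi}\,f(1/2)\,t^{-1/2},
\]
as claimed. The only real pitfall is the orientation-and-sign bookkeeping (which vertical segment is traversed upward, and how this relates to the defining minus signs in $\bar\rho_\pm$); everything else is routine once Lemmas~\ref{lem:tangent-estimates} and \ref{lem:tan-residue} are in hand.
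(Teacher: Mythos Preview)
Your proposal is correct and follows essentially the same route as the paper: a rectangular contour with vertical sides on $\Re s=c_\pm$, the residue theorem together with Lemma~\ref{lem:tan-residue} for the pole at $s=1/2$, and Lemma~\ref{lem:tangent-estimates} combined with the $\lvert f(s)\rvert\le C\lvert s\rvert^m$ bound to kill the horizontal segments as $R\to\infty$. The paper's version spells out the corner points and segments explicitly and justifies the absolute convergence of the vertical integrals before passing to the limit, but the argument is the same.
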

\begin{proof}
  The integrands in $\rho_+,\rho_-$ are holomorphic in
  $S(a,b)\setminus\{1/2\}$ since $f$ is holomorphic in $S(a,b)$. The
  estimates for the tangent function of
  Lemma~\ref{lem:tangent-estimates} imply that
  \[
  \abs{\tan(\pi s)} \leq C_{c_+}
  \]
  when $\Re s = c_+$. This is because $c_+$ is fixed and away from
  half-integers. This and the estimate for $f$ in the assumptions give
  \begin{equation}\label{eq:inversion-growth-estimate}
    \abs{s^{-m-2}\tan(\pi s)f(s)} \leq K \abs{s}^{-2}
  \end{equation}
  for $\Re s=c_+$. Since $\abs{s}^{-2}$ is integrable on $\{c_+ +
  it\mid t\in\mathbb R\}$ we get
  \begin{equation}\label{eq:rhobar-as-a-limit}
    \bar\rho_+(t) = \lim_{M\to\infty}\frac{-1}{2\pi i}
    \int_{c_+-iM}^{c_++iM} s^{-m-2} \tan(\pi s) f(s) t^{-s} ds
  \end{equation}
  for each $t\in\mathbb R_+$.
  
  Define the following points and paths
  \begin{equation}\label{eq:points-segments}
    \begin{cases}
      P_{+-} = c_+ - iM\\ P_{++} = c_+ + iM\\ P_{-+} = c_- +
      iM\\ P_{--} = c_- - iM
    \end{cases}
    \qquad
    \begin{cases}
      \gamma_{+-}(r) = (1-r)P_{+-} + rP_{++}\\ \gamma_{++}(r) =
      (1-r)P_{++} + rP_{-+}\\ \gamma_{-+}(r) = (1-r)P_{-+} +
      rP_{--}\\ \gamma_{--}(r) = (1-r)P_{--} + rP_{+-}
    \end{cases}
  \end{equation}
  which form a counterclockwise rectangle with the point $s=1/2$ in
  the interior of the loop. The integrand in
  (\ref{eq:rhobar-as-a-limit}) is holormorphic in a neighbourhood of
  this rectangle as long as the neighbourhood is small enough to not
  reach $s=1/2$. For any $t\in\mathbb R_+$ denote the integrand by
  \begin{equation}\label{eq:holom1}
    I_t(s)=s^{-m-2}\tan(\pi s)f(s) t^{-s}, \qquad
    I_t:S(a,b)\setminus\{1/2\}\to\C
  \end{equation}
  to save space.

  By Cauchy's residue theorem
  \begin{equation}\label{eq:residue-eq}
    \frac{-1}{2\pi i}\left( \int_{\gamma_{+-}}+ \int_{\gamma_{++}}+
    \int_{\gamma_{-+}}+ \int_{\gamma_{--}} \right) I_t(s) ds = -
    \operatorname{Res}(I_t,1/2).
  \end{equation}
  Let us calcuate the residue at $s=1/2$. The factors of $I_t$ are
  holomorphic around $s=1/2$ except for $\tan(\pi s)$, whose residue
  is given by Lemma~\ref{lem:tan-residue}. We have
  \begin{align}
    \operatorname{Res}(I_t,\tfrac12) &= \left(\frac12\right)^{-m-2}
    f(\tfrac12) t^{-1/2} \operatorname{Res}(\tan(\pi s),\tfrac12)
    \notag\\ &= -\frac{2^{m+2}}{\pi} f(\tfrac12)
    t^{-1/2}. \label{eq:holom1-residue}
  \end{align}

  \smallskip
  Next, let's investigate what happens when we let $M\to\infty$
  again. For the horizontal segments recall the horizontal estimate
  for the tangent in Lemma~\ref{lem:tangent-estimates}. It implies
  that $\abs{\tan(\pi s)} \leq 2$ when $\abs{\Im(s)}>1/\pi$. The
  estimate for $f$ in the assumptions give a uniform bound for
  $\abs{f(s)/s^m}$. Furthermore, $\abs{t^{-s}} = t^{-\Re(s)} \leq
  t^{-\alpha}$ when $\Re(s)> \alpha$. This value is independent of
  $M$. Lastly, on $\gamma_{++}$ and $\gamma_{--}$ we have
  $\abs{s^{-2}}\leq M^{-2}$, and the lengths of these paths are both
  $c_+-c_-$. Summarising, on $\gamma_{++}$ and $\gamma_{--}$ we have
  \[
  \abs{I_t(s)} \leq C t^{-a} M^{-2},
  \]
  so the integrals over these horizontal paths vanish as
  $M\to+\infty$.

  The integral over $\gamma_{+-}$ multiplied by the constant in front
  of it in (\ref{eq:residue-eq}) equals $\bar\rho_+(t)$, as we saw
  above in (\ref{eq:rhobar-as-a-limit}) when we passed the integral
  limits to infinity. Lastly, just as at the beginning of this proof,
  we can let $M\to\infty$ in the integral over $\gamma_{-+}$, and get
  $-\bar\rho_-(t)$. The claim follows.
\end{proof}

We have all the ingredients to prove Theorem~\ref{thm:no-singularity}
and Theorem~\ref{thm:with-singularity}.

\begin{proof}[Proof of Theorem~\ref{thm:no-singularity}]
  Existence is given by Lemma~\ref{lem:explicit-solution}. Uniqueness
  follows from Lemma~\ref{lem:uniqueness-in-part}.
\end{proof}

\begin{proof}[Proof of Theorem~\ref{thm:with-singularity}]
  The existence and non-existence follow from
  Lemma~\ref{lem:explicit-solution}. Uniqueness is given by
  Lemma~\ref{lem:uniqueness-in-part}. All that's left to prove is the
  identity (\ref{eq:solution-difference}). The existence lemma gives
  us formulas for $\rho_-$ and $\rho_+$ in the form of
  (\ref{eq:rho-m-formula}) and (\ref{eq:rho-p-formula}). These are
  just $(-td/dt)^m$ applied to the integrals in
  Lemma~\ref{lem:cauchy-integral} with $f(s)=\M[e](s)$. Thus
  \[
  \rho_+(t) - \rho_-(t) = \frac{2^{m+2}}\pi \M[e](1/2) \left( -t
  \frac{d}{dt} \right)^m \frac1{\sqrt{t}}.
  \]
  But $t^{-1/2}$ is an eigenfunction of $(-t d/dt)$, since
  \[
  (-t d/dt) t^{-1/2} = -t\cdot(-1/2) t^{-1/2-1} = 2^{-1} t^{-1/2}.
  \]
  Hence $(-t d/dt)^m t^{-1/2} = 2^{-m} t^{-1/2}$ and the resul
  follows.
\end{proof}

\section*{Acknowledgements}
All three authors' work was supported by the Estonian Research
Council’s Grant PRG~832. In addition EB's work on this paper was
partly the Academy of Finland (projects 312124 and 336787).

\end{document}